\documentclass[11pt]{article}
\usepackage{multirow}
\usepackage{mathrsfs}
\usepackage[title]{appendix}
\usepackage{manyfoot}
\usepackage{booktabs}
\usepackage{float}
\usepackage[ruled]{algorithm}
\usepackage{algpseudocode}
\algrenewcommand\algorithmicrequire{\textbf{Require:}}
\algrenewcommand\algorithmicensure{\textbf{Ensure:}}
\usepackage{listings}
\usepackage{enumitem}
\usepackage{makecell}
\usepackage{placeins}
\usepackage{graphicx}
\usepackage{xcolor}
\usepackage{adjustbox}

\usepackage{amsmath,amssymb,amsfonts,amsthm,mathtools}
\pdfmapfile{+/usr/share/texlive/texmf-dist/fonts/map/dvips/amsfonts/cm.map}
\pdfmapfile{+/usr/share/texlive/texmf-dist/fonts/map/dvips/amsfonts/cmextra.map}
\usepackage{txfonts}
\usepackage{authblk}

\usepackage{geometry}
\usepackage{array}

\usepackage[colorlinks=true]{hyperref}

\usepackage{cleveref}

\newcommand{\doilink}[1]{%
	\href{https://doi.org/#1}{https://doi.org/#1}%
}

\hypersetup{colorlinks=true,linkcolor=blue,citecolor=blue,urlcolor=blue}

\makeatletter
\providecommand*{\toclevel@algorithm}{0}
\providecommand*{\theHALG@line}{}
\renewcommand*{\theHALG@line}{\thealgorithm.\arabic{ALG@line}}
\makeatother

\newcolumntype{L}[1]{>{\raggedright\arraybackslash}p{#1}}

\newtheorem{theorem}{Theorem}[section]
\newtheorem{lemma}[theorem]{Lemma}
\newtheorem{proposition}[theorem]{Proposition}
\newtheorem{corollary}[theorem]{Corollary}
\newtheorem{assumption}[theorem]{Assumption}
\newtheorem{definition}[theorem]{Definition}

\theoremstyle{remark}
\newtheorem{remark}[theorem]{Remark}

\newcommand{\R}{\mathbb{R}}
\newcommand{\norm}[1]{\left\|#1\right\|}
\newcommand{\grad}{\nabla}
\newcommand{\hess}{\nabla^2}
\DeclareMathOperator*{\argmin}{arg\,min}
\DeclareMathOperator{\co}{co}
\DeclareMathOperator{\diag}{diag}
\newcommand{\clarke}{\partial}
\newcommand{\ytilde}{\widetilde y}
\newcommand{\Cmode}{\mathsf C}
\newcommand{\Ncorr}{\mathrm{N}}
\newcommand{\Ecorr}{\mathrm{E}}
\newcommand{\Hcorr}{\mathrm{H}}
\newcommand{\THMLBFGSN}{\textnormal{\textsc{HMLBFGS-N}}}
\newcommand{\THMLBFGSE}{\textnormal{\textsc{HMLBFGS-E}}}
\newcommand{\THMLBFGSH}{\textnormal{\textsc{HMLBFGS-H}}}

\newcommand{\Iset}{\mathcal I}
\newcommand{\deltac}{m_{\rm cur}}
\newcommand{\Lc}{M_{\rm cur}}

\DeclareMathOperator{\clip}{clip}

\title{
	Preconditioned Hyperbolic Smoothing Modified L-BFGS Algorithms
	for Finite Minimax Problems
}

\author[1]{Wenzhe Zhao}

\affil[1]{National Center for Applied Mathematics in Chongqing,
	Chongqing Normal University, Chongqing 401331, China}

\date{}

\date{}

\begin{document}
	
	\maketitle
	
	% ============================================================
	% Abstract
	% ============================================================
	
	\begin{abstract}
		This paper proposes a preconditioned ordinary hyperbolic smoothing modified L-BFGS framework for finite minimax problems.
		To mitigate the deterioration of Euclidean conditioning as the smoothing parameter decreases, a problem-derived hyperbolic majorization preconditioner is incorporated into the initial L-BFGS metric; it combines component-gradient Lipschitz curvature with the curvature induced by the hyperbolic smoothing Jacobian, is uniformly positive definite, and provably majorizes the Hessian of the smoothed objective.
		To exploit strict negative secant curvature, three correction strategies are introduced---direct sign reflection, a Euclidean nearest-point correction, and a $B_k^{-1}$-metric nearest-point correction---for which explicit formulas are derived and Clarke-stationary accumulation points are obtained for the idealized continuation.
		For a fixed smoothing parameter, Q-linear convergence of the HMLBFGS objective values is obtained under a Polyak--Lojasiewicz condition, local strong convexity further yields R-linear convergence of the iterates, and the corresponding full-memory modified BFGS methods attain local Q-superlinear convergence under the usual unit-step assumptions.
		Numerical experiments demonstrate the effectiveness of the proposed preconditioned methods and their numerical advantages over the selected comparison methods.
	\end{abstract}
	
	\noindent
	\textbf{Keywords:}
	finite minimax problem; nonconvex large-scale problems; hyperbolic smoothing; BFGS.
	
	% ============================================================
	\section{Introduction}
	\label{sec:introduction}
	% ============================================================
	Consider the unconstrained finite minimax problem
	\begin{equation}
		\label{eq:minimax}
		\min_{x\in\R^n} f(x),
		\qquad
		f(x):=\max_{i\in\Iset} f_i(x),
		\qquad
		\Iset:=\{1,\ldots,m\}.
	\end{equation}
	From a theoretical perspective, finite minimax problems are closely related
	to nonlinear programming~\cite{Jiang2021}, systems of nonlinear equations~\cite{Liuhaoyang2020}, linear complementarity problems~\cite{Wang2008}, and multiobjective optimization~\cite{Nogueira2026}. They also arise in a broad range of applications, including data fitting~\cite{BertsimasBrownCaramanis2011}, structural optimization under
	load uncertainty~\cite{NishiokaKanno2023}, location problems~\cite{DreznerZerom2023}, resource allocation~\cite{Park2021}, and group-robust learning~\cite{Pillutla2024}. 
	
	Existing methods for finite minimax problems can be broadly classified into two categories. The first category consists of nonsmooth methods, which directly solve the original problem, including subgradient methods~\cite{ChatelonHearnLowe1978}, gradient-sampling methods~\cite{BurkeLewisOverton2005}, cutting plane methods~\cite{GaudiosoEtAl2022} and bundle methods~\cite{TangJianLi2019,DiazGrimmer2023}. These methods typically construct search directions based on negative subgradients or suitable combinations of subgradients, and then generate the next iterate via a line-search procedure. The second category consists of smoothing methods, which replace the objective function $f$ with a family of smooth approximations, thereby allowing optimization techniques developed for smooth problems to be employed. Smoothing methods can be further classified into local smoothing methods and global smoothing methods. Local smoothing methods \cite{Zang1980,BagirovTaheri2018} regularizes the objective only near kink points, namely, points at which the objective function is
	nondifferentiable. However, local smoothing may generate spurious local minimizers through combinations of	adjacent shallow local minimizers. To avoid this drawback, global smoothing
	methods, which regularize the objective throughout its domain, have received considerable attention.
	
	Global smoothing methods typically employ exponential smoothing or hyperbolic smoothing functions.
	Aggregate exponential smoothing is a widely used global smoothing technique
	for finite minimax problems~\cite{Li1992,PolakRoysetWomersley2003,XiaoYu2010,LiuZheng2020}. However, exponential smoothing functions may suffer from numerical overflow, particularly when the number of component functions is large.
	Hyperbolic smoothing avoids exponential evaluations and hence avoids
	this overflow issue~\cite{BagirovTaheri2018,Bagirov2013}. Here we use the ordinary full model and do not discard any component.
	
	One of the principal advantages of smoothing approaches is that they allow one to exploit a broad class of well-established optimization algorithms for smooth problems, including steepest descent methods~\cite{LiuZheng2020}, accelerated gradient methods, stochastic gradient methods, Newton methods~\cite{YeLiuZhouLiu2008}, quasi-Newton methods~\cite{LiuNocedal1989,NocedalWright2006}, trust-region methods, and conjugate gradient methods~\cite{PangDuJu2016,GuoWan2022}. Among quasi-Newton methods,
	the BFGS method is widely used because of its favorable convergence behavior
	and modest computational cost~\cite{DennisMore1977}; its limited-memory variant, L-BFGS, is
	particularly suitable for large-scale problems~\cite{LiuNocedal1989,NocedalWright2006}. Classical BFGS analyses commonly use the positive
	curvature condition implied by a Wolfe-type line search. In contrast, an
	Armijo step on a nonconvex smoothed minimax model can yield $s_k^Ty_k<0$.
	To overcome this difficulty, existing methods typically employ strategies such as skipping the update, using damped updates~\cite{Powell1978}, introducing regularization terms~\cite{Mannel2025,KanzowSteck2023}, or relaxing the secant condition~\cite{BerglundZhangJohansson2025}.
	For example, Li and Fukushima
	~\cite{LiFukushima2001a,LiFukushima2001b} proposed modified BFGS updates for
	nonconvex minimization; Powell damping~\cite{Powell1978} blends the observed
	curvature vector with a positive-curvature vector; cautious and regularized
	L-BFGS updates~\cite{Mannel2025,KanzowSteck2023} control the use of
	curvature pairs; and soft quasi-Newton updating~\cite{BerglundZhangJohansson2025}
	relaxes the secant constraint. These methods preserve a positive definite
	metric, but they may discard a negative-curvature pair or replace part of its
	observed curvature information in order to recover a positive-definite model.
	
	The starting point of our design is the following question: how can one
	mitigate the increasing ill-conditioning induced by hyperbolic smoothing while,
	at the same time, retaining and exploiting negative-curvature information to
	construct effective descent directions?  These two issues become particularly
	important in the later stages of smoothing continuation, where a small
	smoothing parameter is required to approximate the original finite minimax
	problem accurately.
	
	The source of the first difficulty can be seen directly from the structure of
	the Hessian of the smoothed objective.  With the notation introduced below,
	\begin{equation}
		\label{eq:full-hessian-intro}
		\nabla^2\Phi_\tau(x,t)
		=
		\frac12\sum_{i\in\Iset}
		\left[
		\bigl(1+\beta_{i,\tau}(x,t)\bigr)D_i(x)
		+
		\vartheta_{i,\tau}(x,t)q_i(x)q_i(x)^T
		\right],
	\end{equation}
	where
	\[
	\vartheta_{i,\tau}(x,t)
	=
	\frac{\tau^2}
	{\bigl((f_i(x)-t)^2+\tau^2\bigr)^{3/2}}.
	\]
	The first term in \eqref{eq:full-hessian-intro} inherits the curvature of the
	component functions, whereas the second rank-one term is generated by the
	hyperbolic smoothing.  Near a switching surface \(f_i(x)=t\), one has
	\(\vartheta_{i,\tau}=1/\tau\); hence this smoothing-induced curvature becomes
	increasingly large as \(\tau\downarrow0\), while curvature in other directions
	need not grow at the same rate.  More precisely, on a bounded set
	\(\mathcal Q\), the gradient-Lipschitz bound satisfies
	\begin{equation}
		\label{eq:tau-dependent-L-intro}
		L_{\mathcal Q,\tau}
		=
		m\omega_{\mathcal Q}
		+
		\frac{m}{2\tau}
		\bigl(\gamma_{\mathcal Q}^2+1\bigr),
	\end{equation}
	which deteriorates at the rate \(O(\tau^{-1})\).  Thus the worsening
	conditioning is a structural consequence of the hyperbolic smoothing itself,
	rather than merely a numerical artifact.
	
	This observation motivates a preconditioner tailored to the same curvature
	structure.  Instead of using an ad hoc diagonal scaling, we construct
	\begin{equation}
		\label{eq:Pk-intro}
		P(z,\tau)
		=
		\delta I_{n+1}
		+
		\bar L\Pi_x
		+
		\frac12J(z)^TD^{\rm hyp}(z,\tau)J(z),
	\end{equation}
	where, with
	\(\omega_i=\bigl((f_i(x)-t)^2+\tau^2\bigr)^{1/2}\),
	\(D^{\rm hyp}(z,\tau)=\operatorname{diag}(\omega_1^{-1},\ldots,
	\omega_m^{-1})\).  The construction follows directly from the Hessian
	structure above.  Indeed,
	\[
	\vartheta_{i,\tau}
	=
	\frac{\tau^2}{\omega_i^3}
	\le \frac{1}{\omega_i},
	\]
	so the term \(J^TD^{\rm hyp}J/2\) majorizes the rank-one curvature generated
	by hyperbolic smoothing and reproduces its \(O(\tau^{-1})\) scaling near
	switching surfaces, while \(\bar L\Pi_x\) controls the curvature inherited
	from the component functions.  After adding \(\delta I_{n+1}\), the
	resulting matrix is uniformly positive definite and provably majorizes the
	Hessian of the smoothed objective.  We therefore use
	\(H_{k,0}=\chi_kP_k^{-1}\) as the initial inverse L-BFGS metric, so that the
	directions in which hyperbolic smoothing introduces excessive curvature are
	rescaled before the limited-memory secant information is accumulated.  Thus
	the proposed preconditioner is not a generic scaling device; it is constructed
	specifically from the mechanism that causes the smoothing problem to become
	ill-conditioned as \(\tau\downarrow0\).
	
	Ill-conditioning, however, is only one part of the difficulty.  Since the
	smoothed minimax model is generally nonconvex, an accepted Armijo step can
	produce a secant pair
	\[
	s_k=z_{k+1}-z_k,\qquad
	y_k=g_{k+1}-g_k,
	\]
	with \(s_k^Ty_k<0\).  As reviewed above, many quasi-Newton strategies restore
	positive definiteness by skipping such a pair, damping it, or adding a
	regularizing modification that overrides the negative curvature.  Our
	viewpoint is that the computed vector \(y_k\) does not cease to be
	informative when \(s_k^Ty_k<0\): it still contains curvature information from the underlying
	smoothed problem, and the negative secant product records negative average
	curvature along the accepted displacement.  Rather than discarding this
	information, we seek to transform it into an admissible positive-curvature
	pair from which a positive-definite quasi-Newton model, and hence an acceptable
	descent direction, can be constructed.
	
	A natural first choice is direct sign reflection,
	\[
	\widetilde y_k^{\Ncorr}=-y_k,
	\]
	for which
	\(s_k^T\widetilde y_k^{\Ncorr}=-s_k^Ty_k=|s_k^Ty_k|>0\).
	If, in addition to enforcing the same target curvature
	\[
	s_k^T\widetilde y_k=-s_k^Ty_k,
	\]
	we require the corrected vector to remain as close as possible to the
	observed curvature vector \(y_k\), then two further corrections arise
	naturally from the metric-projection problem
	\begin{equation}
		\label{eq:design-problem-intro}
		\min_v\;
		\frac12\|v-y_k\|_M^2
		\quad\text{subject to}\quad
		s_k^Tv=-s_k^Ty_k.
	\end{equation}
	Choosing \(M=I\) gives the Euclidean minimum-change correction, whereas
	choosing \(M=B_k^{-1}\) gives the minimum-change correction in the current
	quasi-Newton geometry.  Together with direct sign reflection, these two
	projections provide three distinct ways of converting an observed
	negative-curvature pair into an admissible positive-secant pair while
	retaining, in different geometries, as much of the original curvature
	information as possible.
	
	Combining the hyperbolic-structure-based preconditioner with these three
	negative-curvature corrections leads to a family of preconditioned
	hyperbolic smoothing modified L-BFGS methods for large-scale finite minimax
	problems.  The preconditioner is designed to counteract the
	smoothing-induced deterioration of conditioning, whereas the correction
	mechanism makes explicit use of otherwise discarded negative-curvature
	information.  The contributions are as follows.

	\begin{enumerate}[label=(\arabic*)]
		\item A problem-derived hyperbolic-majorization preconditioner is
		constructed to alleviate the deterioration of conditioning as
		\(\tau\downarrow0\).  It combines a bound on the component curvature with
		the curvature induced by the hyperbolic smoothing Jacobian, is uniformly
		positive definite, and provably majorizes the Hessian of the smoothed
		objective.  It is incorporated through the initial inverse metric
		\(H_{k,0}=\chi_kP_k^{-1}\), so the L-BFGS recursion starts in a geometry
		adapted to the dominant smoothing curvature.
		\item Three modified BFGS rules are developed to use strict negative
		curvature rather than discard it: direct sign reflection, a Euclidean
		nearest-point correction, and a \(B_k^{-1}\)-metric nearest-point
		correction.  Each rule gives the same prescribed positive secant
		curvature while changing the observed curvature information in a
		controlled manner.
		\item A preconditioned ordinary hyperbolic smoothing modified L-BFGS
		framework is developed for finite minimax problems.  The continuation,
		Armijo line search, preconditioned BB2-type scaling, limited-memory
		recursion, and memory safeguards are common to \THMLBFGSN, \THMLBFGSE,
		and \THMLBFGSH; only the negative-curvature pair correction changes.
		\item The resulting safeguards yield a fixed-layer uniformly bounded SPD
		L-BFGS metric and descent directions.  Under idealized continuation with
		vanishing inner tolerances, every accumulation point is Clarke stationary
		for the original minimax objective.  For a fixed smoothing parameter, the HMLBFGS
		objective values converge Q-linearly under a PL condition, while local
		strong convexity yields R-linear convergence of the iterates.  For the
		corresponding full-memory modified BFGS methods, local
		\(Q\)-superlinear convergence is established under local strong
		convexity.  Numerical experiments illustrate the effectiveness and
		numerical advantages of the proposed methods.
	\end{enumerate}
	
	The remainder of the paper is organized as follows.
	Section~\ref{sec:prelim} introduces notation and the finite-minimax
	stationarity model, and Section~\ref{sec:hyperbolic} describes the ordinary
	hyperbolic smoothing model.  Section~\ref{sec:preconditioner} develops the
	hyperbolic-majorization preconditioner used to address the conditioning
	deterioration of the smoothed problem.  The three negative-curvature BFGS
	corrections are analyzed in Section~\ref{sec:three-corrections}, followed by
	the preconditioned L-BFGS recursion in Section~\ref{sec:lbfgs}.
	Section~\ref{sec:algorithm} gives the complete HMLBFGS framework and the
	corresponding BFGS update.  Section~\ref{sec:global} establishes global
	convergence, and Section~\ref{sec:lbfgs-linear} gives linear convergence
	rates for the fixed-\(\tau\) HMLBFGS inner iterations.  The local behavior
	of the corresponding full-memory BFGS methods is studied in
	Section~\ref{sec:local}.  Section~\ref{sec:numerics} presents numerical
	experiments, and Section~\ref{sec:conclusion} concludes the paper.
	
	% ============================================================
	\section{Preliminaries}
	\label{sec:prelim}
	% ============================================================
	
	This section fixes the notation used by the smoothing model and the convergence analysis. In~\eqref{eq:minimax},
	$\Iset:=\{1,\ldots,m\}$, $f_i:\R^n\to\R$, and
	$f(x):=\underset{i\in\Iset}{\max}f_i(x)$.
	The lifted variable is $z=(x,t)\in\R^{n+1}$ and $0_n$ is the zero vector
	in $\R^n$. The inner product and norm are
	the Euclidean ones, denoted by $\langle\cdot,\cdot\rangle$ and
	$\norm{\cdot}$, respectively. When applied to a matrix,
	$\norm{\cdot}$ denotes the induced Euclidean norm. For symmetric matrices,
	$\mathbf A\succ0$ and $\mathbf A\succeq0$ mean positive definite and positive semidefinite.
	
	For a set $B$, $\co(B)$ and $|B|$ denote its convex hull and cardinality.
	The active index set and lower level set are
	\begin{align}
		\label{eq:basic-sets}
		\Iset(x)&:=\{i\in\Iset:f_i(x)=f(x)\},&
		\mathcal L_{f,c}&:=\{x\in\R^n:f(x)\le c\},\\
		\gamma(x)&:=\max_{i\in\Iset}\norm{\grad f_i(x)},&
		\omega(x)&:=\max_{i\in\Iset}\norm{\hess f_i(x)}.
		\label{eq:gamma-omega}
	\end{align}
	Since a finite maximum of continuously differentiable functions is locally
	Lipschitz, its Clarke subdifferential is given by the standard finite-max
	rule
	\begin{equation}
		\label{eq:clarke-max}
		\clarke f(x)
		=\co\{\grad f_i(x):i\in\Iset(x)\}.
	\end{equation}
	Throughout the paper, $\partial$ denotes the Clarke subdifferential whenever it is applied to a locally Lipschitz function. A point $x$ is called Clarke stationary if $0\in\partial f(x)$.
	
	If $\psi:\R^d\to\R$ be differentiable and let
	\[
	\psi^*:=\inf_{u\in\R^d}\psi(u)>-\infty.
	\]
	\begin{definition}[Polyak--Lojasiewicz condition,~\cite{AbbaszadehpeivastiEtAl2023}]
		\label{def:pl}
		For a set $\mathcal Q\subseteq\R^d$, the function $\psi$ is said to
		satisfy the Polyak--\L{}ojasiewicz (PL) condition on $\mathcal Q$ if
		there exists $\mu_{\rm PL}>0$ such that
		\begin{equation}
			\label{eq:pl-definition}
			\frac12\norm{\grad\psi(u)}^2
			\ge
			\mu_{\rm PL}\bigl(\psi(u)-\psi^*\bigr),
			\qquad u\in\mathcal Q.
		\end{equation}
	\end{definition}
	
	\begin{remark}
		We can see that strong convexity implies the PL condition, whereas the PL condition itself does not require convexity in \cite{AbbaszadehpeivastiEtAl2023}.
	\end{remark}
	
	For the plus function
	\begin{equation}
		\label{eq:plus}
		\theta(r):=\max\{0,r\},
		\qquad r\in\R,
	\end{equation}
	we consider the hyperbolic smoothing function used in
	\cite{Bagirov2013}:
	\begin{equation}
		\label{eq:phi}
		\phi_\tau(r):=
		\frac{r+\sqrt{r^2+\tau^2}}{2},
		\qquad r\in\R,
	\end{equation}
	where $\tau>0$ is the smoothing parameter.
	
	\begin{proposition}[{\cite[Proposition~1]{Bagirov2013}}]
		\label{prop:phi-properties}
		The hyperbolic smoothing function $\phi_\tau$ has the following
		properties:
		\begin{enumerate}[label=\textup{(\roman*)},leftmargin=*]
			\item $\phi_\tau$ is monotonically increasing, convex, and infinitely
			differentiable;
			\item for every $r\in\R$,
			\begin{equation}
				\label{eq:phi-gap}
				\theta(r)<\phi_\tau(r)
				\le\theta(r)+\frac{\tau}{2}.
			\end{equation}
		\end{enumerate}
	\end{proposition}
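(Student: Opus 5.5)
The plan is a direct verification from the closed form $\phi_\tau(r)=\tfrac12\bigl(r+\sqrt{r^2+\tau^2}\bigr)$, computing the first two derivatives explicitly. Since $\tau>0$, the map $r\mapsto r^2+\tau^2$ is a $C^\infty$ function bounded below by $\tau^2>0$. The square root is $C^\infty$ on $(0,\infty)$, so $\sqrt{r^2+\tau^2}$ is infinitely differentiable on $\R$, and hence so is $\phi_\tau$.

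Next I compute
\[
\phi_\tau'(r)=\frac12\Bigl(1+\frac{r}{\sqrt{r^2+\tau^2}}\Bigr),
\qquad
\phi_\tau''(r)=\frac{\tau^2}{2\,(r^2+\tau^2)^{3/2}}.
\]
The strict bound $|r|<\sqrt{r^2+\tau^2}$ gives $0<\phi_\tau'(r)<1$, so $\phi_\tau$ is (strictly) monotonically increasing. The formula $\phi_\tau''>0$ gives convexity (in fact strict convexity), which proves (i).

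For (ii), I split into the cases $r\ge0$ and $r<0$.
\begin{itemize}
\item If $r\ge0$, then $\theta(r)=r$. The claim $\phi_\tau(r)>r$ is equivalent to $\sqrt{r^2+\tau^2}>r$, which holds because $\tau>0$.
\item If $r<0$, then $\theta(r)=0$. The claim $\phi_\tau(r)>0$ is equivalent to $\sqrt{r^2+\tau^2}>-r=|r|$, which again holds.
\end{itemize}
So in both cases the lower bound reduces to $\sqrt{r^2+\tau^2}>|r|$. For the upper bound I use the elementary inequality $\sqrt{r^2+\tau^2}\le |r|+\tau$, obtained by squaring both sides (they differ by $2|r|\tau\ge0$). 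Then
\[
\phi_\tau(r)\le \frac{r+|r|+\tau}{2}=\theta(r)+\frac{\tau}{2},
\]
because $(r+|r|)/2=\theta(r)$.

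I expect no step to be a real obstacle. The only points needing care are keeping the strict inequality on the left of \eqref{eq:phi-gap}, which relies on $\tau>0$, and using the identity $r+|r|=2\theta(r)$ so that the two cases of the upper bound collapse into one line.
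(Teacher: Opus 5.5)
Your proof is correct and complete. The strict bound $|r|<\sqrt{r^2+\tau^2}$, where $\tau>0$ enters, gives both $0<\phi_\tau'<1$ and the left inequality. The estimate $\sqrt{r^2+\tau^2}\le|r|+\tau$, combined with $r+|r|=2\theta(r)$, gives the right inequality in \eqref{eq:phi-gap}; it is correctly non-strict, since equality holds exactly at $r=0$.

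The paper offers no argument of its own here; the proposition is simply quoted from \cite[Proposition~1]{Bagirov2013}. So your verification is a self-contained substitute rather than a competing route.

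It also yields slightly more than the statement. Your bound $0<\phi_\tau'<1$ shows that $\phi_\tau$ is $1$-Lipschitz, which the proof of Theorem~\ref{thm:global-major} invokes although it is not part of this proposition. Your formula for $\phi_\tau''$ reproduces $\phi_\tau''(0)=1/(2\tau)$ and the coefficients $c_i=\tau^2/(2\omega_i^3)$ used in Section~\ref{sec:preconditioner}.

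If you prefer to avoid the case split in (ii), note that
\[
\phi_\tau(r)-\theta(r)=\frac{\sqrt{r^2+\tau^2}-|r|}{2}=\frac{\tau^2}{2\bigl(\sqrt{r^2+\tau^2}+|r|\bigr)},
\]
whose denominator is at least $2\tau$. This gives both inequalities at once and shows that the gap decays like $\tau^2/(4|r|)$ as $|r|\to\infty$.
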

	The epigraph formulation and its associated finite-max function are
	\begin{equation}
		\label{eq:epigraph}
		\min_{(x,t)\in\R^{n+1}}\{t:f_i(x)-t\le0,\ i\in\Iset\},
		\qquad
		F(x,t):=t+\sum_{i\in\Iset}\max\{0,f_i(x)-t\}.
	\end{equation}
	\begin{proposition}[{\cite[Proposition~2]{Bagirov2013}}]
		\label{prop:f-F-value}
		For every $x\in\R^n$,
		\begin{equation}
			\label{eq:f-F-value}
			f(x)=\min_{t\in\R}F(x,t),
			\qquad F(x,f(x))=f(x).
		\end{equation}
	\end{proposition}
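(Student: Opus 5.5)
Fix $x\in\R^n$ and study the one-variable function $t\mapsto F(x,t)=t+\sum_{i\in\Iset}\max\{0,f_i(x)-t\}$ from \eqref{eq:epigraph}. It is piecewise linear and convex in $t$, since each summand is a convex function of $t$. The plan has two parts. First, compute $F(x,f(x))$ directly. Second, show that $t=f(x)$ minimizes $F(x,\cdot)$ by comparing $F(x,t)$ with $F(x,f(x))$ separately for $t\ge f(x)$ and $t<f(x)$.

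For the value at $t=f(x)$: by definition $f(x)=\max_i f_i(x)$, so $f_i(x)-f(x)\le 0$ for every $i\in\Iset$. Every plus term therefore vanishes, and $F(x,f(x))=f(x)$. This is the second identity in \eqref{eq:f-F-value}.

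For the minimization, take $t\ge f(x)$ first. Then $f_i(x)-t\le0$ for all $i$, so $F(x,t)=t\ge f(x)$.

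Now take $t<f(x)$. Pick an active index $j\in\Iset(x)$, which exists because $\Iset$ is finite and nonempty. Then $\max\{0,f_j(x)-t\}=f(x)-t>0$. All other summands are nonnegative, so
\[
F(x,t)\ \ge\ t+\bigl(f(x)-t\bigr)=f(x).
\]
Combining the two cases gives $F(x,t)\ge f(x)=F(x,f(x))$ for every $t\in\R$. Hence the infimum over $t$ is attained at $t=f(x)$ and equals $f(x)$, which is the first identity.

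No step is a real obstacle; the argument is elementary. The only point needing care is the case $t<f(x)$, where the lower bound must come from an active index rather than from all components. Keeping just that one plus term and discarding the others, which are nonnegative, is what makes the bound work.
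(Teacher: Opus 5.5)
Your proof is correct and complete. It evaluates $F(x,f(x))=f(x)$ and then splits into $t\ge f(x)$ and $t<f(x)$, using a single active index in the second case; both cases together amount to $F(x,t)\ge t+\max\{0,f(x)-t\}=\max\{t,f(x)\}\ge f(x)$. The paper does not prove this proposition itself, only citing Proposition~2 of Bagirov et al.\ (2013), and your argument is the standard elementary one for this fact.
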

	
	\begin{proposition}[{\cite[Proposition~3]{Bagirov2013}}]
		\label{prop:f-F-stationarity}
		Suppose that every $f_i$ is continuously differentiable.
		\begin{enumerate}[label=\textup{(\roman*)},leftmargin=*]
			\item If $x^*$ is Clarke stationary for $f$, then
			$(x^*,f(x^*))$ is Clarke stationary for $F$.
			\item If $(x^*,t^*)$ is Clarke stationary for $F$, then $x^*$ is
			Clarke stationary for $f$.
		\end{enumerate}
	\end{proposition}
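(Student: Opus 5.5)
The plan is to compute the Clarke subdifferential of $F$ explicitly and compare it with \eqref{eq:clarke-max}. Each summand $\max\{0,f_i(x)-t\}$ is a finite maximum of the $C^1$ functions $0$ and $f_i(x)-t$. The Clarke subdifferential of such a term at $(x,t)$ is therefore $\{0\}$ if $f_i(x)<t$, $\{(\grad f_i(x),-1)\}$ if $f_i(x)>t$, and the segment $\{\lambda_i(\grad f_i(x),-1):\lambda_i\in[0,1]\}$ if $f_i(x)=t$. For (i) I only need the inclusion ``$\subseteq$'' of the sum rule. For (ii) I want equality, so I will avoid the sum rule there and argue through the structure of $F$ directly.

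\textbf{Part (i).} Let $t^*=f(x^*)$. Then $f_i(x^*)\le t^*$ for every $i$, with equality exactly for $i\in\Iset(x^*)$. Near $(x^*,t^*)$ the inactive terms vanish identically, so
\[
F(x,t)=t+\sum_{i\in\Iset(x^*)}\max\{0,f_i(x)-t\}.
\]
By \eqref{eq:clarke-max} there are $\lambda_i\ge0$, $i\in\Iset(x^*)$, with $\sum_i\lambda_i=1$ and $\sum_i\lambda_i\grad f_i(x^*)=0$. Each $\lambda_i$ lies in $[0,1]$. Each summand $\max\{0,f_i-t\}$ is a max of $C^1$ functions, hence Clarke regular, and $t$ is smooth. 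The sum rule therefore holds with equality, giving
\[
\clarke F(x^*,t^*)=\{(0_n,1)\}+\sum_{i\in\Iset(x^*)}\{\lambda_i(\grad f_i(x^*),-1):\lambda_i\in[0,1]\}.
\]
Choosing the multipliers above yields $(0_n,1-1)=0$, so $0\in\clarke F(x^*,t^*)$.

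\textbf{Part (ii).} Since every summand is regular, the same sum rule gives the exact description of $\clarke F(x^*,t^*)$ for general $t^*$. Suppose $0$ belongs to it. Let $A=\{i:f_i(x^*)>t^*\}$ and $E=\{i:f_i(x^*)=t^*\}$. Then there are $\lambda_i\in[0,1]$ for $i\in E$, and $\lambda_i=1$ for $i\in A$, such that
\[
\sum_{i\in A\cup E}\lambda_i\grad f_i(x^*)=0,\qquad 1-\sum_{i\in A\cup E}\lambda_i=0.
\]
The main obstacle is showing that every index carrying positive weight is active for $f$ at $x^*$, i.e.\ that $A\cup E$ effectively reduces to $\Iset(x^*)$.

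The first step is to show $A=\emptyset$. Since every $\lambda_i\ge0$ and they sum to $1$, if $|A|\ge1$ then each $i\in A$ has $\lambda_i=1$, forcing all other weights to be $0$ and $|A|=1$. That case needs separate handling, because then $\grad f_j(x^*)=0$ for the unique $j\in A$, while $f_j(x^*)>t^*$ need not make $j$ active in $f$.

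I would try to rule this case out using the structure of $F$ in $t$. For fixed $x^*$, the map $t\mapsto F(x^*,t)$ is convex piecewise linear with slope $1-|\{i:f_i(x^*)>t\}|$, plus the segment contributions at the breakpoints. Clarke stationarity in the $t$-component alone forces $t^*$ to be a minimizer of this convex function. If $t^*<f(x^*)$, the slope just to the right of $t^*$ is at most $0$, and it is strictly negative unless exactly one index exceeds $t^*$. Thus the troublesome configuration is precisely when $t^*$ lies in the interval where exactly one $f_i(x^*)$ exceeds $t$. In that configuration $j$ is the unique maximizer, so $j\in\Iset(x^*)$ after all. Then $\grad f_j(x^*)=0$ gives $0\in\clarke f(x^*)$ directly.

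In the remaining case $A=\emptyset$, we have $t^*\ge f_i(x^*)$ for all $i$, and $E\ne\emptyset$ because the weights sum to $1$. Hence $t^*=f(x^*)$ and $E=\Iset(x^*)$. The weights $\lambda_i$ then form a convex combination with $\sum_{i\in\Iset(x^*)}\lambda_i\grad f_i(x^*)=0$, so $x^*$ is Clarke stationary for $f$ by \eqref{eq:clarke-max}.
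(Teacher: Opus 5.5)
Your proof is correct. The paper gives no argument of its own for this proposition; it is quoted from the hyperbolic-smoothing reference, so there is no in-paper route to compare with. Your description of $\clarke F$, via the sum rule for the Clarke-regular plus terms $\psi_i(x,t)=\max\{0,f_i(x)-t\}$, is exactly what the paper uses without comment in the proof of Corollary~\ref{cor:stationarity-lift}. There it asserts $(0_n,1)+\sum_i a_iq_i(x^*)\in\clarke F(x^*,t^*)$, and your regularity argument is what justifies that assertion.

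Three points are worth tidying.

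\textbf{Which inclusion each part needs.} Your opening plan is muddled on this. Part (ii) only requires the general inclusion $\clarke F\subseteq\{(0_n,1)\}+\sum_{i\in\Iset}\clarke\psi_i$, which holds without regularity. Part (i) needs the reverse inclusion, i.e.\ membership of a specific vector in $\clarke F$, and that is where regularity is indispensable. Your written proof invokes equality in both parts, so nothing actually breaks.

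\textbf{The claim $A=\emptyset$.} In (ii) you say the first step is to show $A=\emptyset$, but this is false in general. Take $n=1$, $f_1(x)=x^2+1$, $f_2\equiv0$. Then $F(x,t)=x^2+1$ near $(0,1/2)$, so $(0,1/2)$ is Clarke stationary for $F$ with $A=\{1\}$ and $t^*\neq f(0)$. What your multiplier argument actually gives is $|A|\le1$, and that suffices.

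\textbf{The case $A=\{j\}$.} This case needs no detour through the convexity of $t\mapsto F(x^*,t)$. By the definition of $A$, $f_j(x^*)>t^*\ge f_i(x^*)$ for every $i\neq j$. Hence $\Iset(x^*)=\{j\}$ and $\clarke f(x^*)=\{\grad f_j(x^*)\}=\{0_n\}$ immediately, so your concern that $j$ might be inactive cannot arise. If you keep the detour, note that ``Clarke stationarity in the $t$-component forces $t^*$ to minimize $F(x^*,\cdot)$'' is not automatic. The projection of a Clarke subdifferential can strictly contain the partial subdifferential. It holds here only because your explicit formula shows that the $t$-components of $\clarke F(x^*,t^*)$ fill exactly $[1-|A|-|E|,\,1-|A|]$, which is the convex subdifferential of $F(x^*,\cdot)$ at $t^*$.
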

	
	\begin{proposition}[{\cite[Proposition~4]{Bagirov2013}}]
		\label{prop:f-F-minimizer}
		\begin{enumerate}[label=\textup{(\roman*)},leftmargin=*]
			\item If $x^*$ is a local minimizer of $f$, then
			$(x^*,f(x^*))$ is a local minimizer of $F$.
			\item If $(x^*,t^*)$ is a local minimizer of $F$, then $x^*$ is a
			local minimizer of $f$.
		\end{enumerate}
	\end{proposition}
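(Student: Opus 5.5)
The plan is to reduce both parts to pointwise facts about $F(\cdot,t)$ and $F(x,\cdot)$, and to use Proposition~\ref{prop:f-F-value} as the bridge between $f$ and $F$.

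\emph{Part (i).} Let $x^*$ be a local minimizer of $f$, so there is $\varepsilon>0$ with $f(x)\ge f(x^*)$ whenever $\norm{x-x^*}<\varepsilon$. Take any $(x,t)$ with $\norm{x-x^*}<\varepsilon$ and $t$ arbitrary. By \eqref{eq:f-F-value},
\[
F(x,t)\ge\min_{s\in\R}F(x,s)=f(x)\ge f(x^*)=F(x^*,f(x^*)).
\]
Hence $(x^*,f(x^*))$ minimizes $F$ over the neighborhood $\{\norm{x-x^*}<\varepsilon\}\times\R$, and therefore also locally. This direction is routine.

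\emph{Part (ii).} Let $(x^*,t^*)$ be a local minimizer of $F$ on a ball $B_\varepsilon(x^*)\times(t^*-\varepsilon,t^*+\varepsilon)$. The first step is to pin down $t^*$. For fixed $x$, the map $t\mapsto F(x,t)$ is convex and piecewise linear. For $t>f(x)$ its slope is $1$. For $t<f(x)$ its slope is $1-|\{i: f_i(x)>t\}|\le 0$, since at least one index exceeds $t$. So the set of minimizers of $F(x,\cdot)$ is a closed interval whose right endpoint is $f(x)$.

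Local minimality in $t$ at $x^*$, together with convexity in $t$, makes $t^*$ a global minimizer of $F(x^*,\cdot)$. Hence $F(x^*,t^*)=f(x^*)$ and $t^*\le f(x^*)$.

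Now take $x$ near $x^*$. By continuity of $f$ we may choose $x$ so that $f(x)$ lies close to $f(x^*)$. The natural move is to evaluate $F$ at $(x,f(x))$, which gives $F(x,f(x))=f(x)$. The difficulty is that $f(x)$ need not lie within $\varepsilon$ of $t^*$ when $t^*<f(x^*)$.

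\emph{Main obstacle: the case $t^*<f(x^*)$.} I would handle it by working with $t^*$ itself, or with a point of the minimizing interval near $t^*$. Take $x$ near $x^*$ and let $I(x)$ be the minimizing interval of $F(x,\cdot)$. The plan is to show that some point of $I(x)$ lies within $\varepsilon$ of $t^*$, or else to use convexity to get $F(x,t^*)\ge f(x)$ up to a controllable error.

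A cleaner route is as follows. A flat minimizing interval of length greater than $0$ occurs only when exactly one index satisfies $f_i(x)>t$ on that interval. In that situation $F(x,t)=f_{i}(x)$ there, with the lone index necessarily the maximizing one, and so $F(x,t)=f(x)$ on the interval. The same structure persists for $x$ near $x^*$: by continuity, a unique index $i_0$ has $f_{i_0}(x)>t^*$ while every other $f_j(x)<t^*$, provided the strict inequalities held at $x^*$.

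In that case $F(x,t^*)=f_{i_0}(x)=f(x)$, and so
\[
f(x)=F(x,t^*)\ge F(x^*,t^*)=f(x^*).
\]
The boundary case, where some $f_j(x^*)=t^*$ for $j\ne i_0$, reduces to the right endpoint, i.e.\ to $t^*=f(x^*)$, because the slope there is already $0$. In the case $t^*=f(x^*)$, continuity of $f$ places $f(x)$ within $\varepsilon$ of $t^*$ for $x$ near $x^*$, so the point $(x,f(x))$ is admissible and
\[
f(x)=F(x,f(x))\ge F(x^*,t^*)=f(x^*).
\]
Either way $x^*$ is a local minimizer of $f$.

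The step I expect to need the most care is this case split on the shape of the minimizing interval. There I would argue carefully with the slope formula and the continuity of the $f_i$.
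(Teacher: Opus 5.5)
Part (i), the case $t^*=f(x^*)$ of part (ii), and the strict case $\max_{j\ne i_0}f_j(x^*)<t^*<f_{i_0}(x^*)$ are all correct. The gap is the remaining boundary case $t^*=\max_{j\ne i_0}f_j(x^*)<f(x^*)$. You dispose of it by saying it ``reduces to the right endpoint'' because the slope of $F(x^*,\cdot)$ is $0$ there.

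Zero slope only gives $F(x^*,f(x^*))=F(x^*,t^*)$; it does not make $(x^*,f(x^*))$ a local minimizer of $F$. Your argument for the case $t^*=f(x^*)$ needs $F(x,f(x))\ge f(x^*)$ for all $x$ near $x^*$, that is, local minimality \emph{at} $(x^*,f(x^*))$. The hypothesis only controls $F$ on the box $B_\varepsilon(x^*)\times(t^*-\varepsilon,t^*+\varepsilon)$. If $f(x^*)-t^*\ge\varepsilon$, the points $(x,f(x))$ lie outside that box. Convexity in $t$ alone cannot transport a lower bound on $F(x,\cdot)$ from one $t$-region to another. So the reduction is unjustified as written. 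The fact that $(x^*,f(x^*))$ is a local minimizer of $F$ is true, but only as a consequence of the proposition itself, via part (i).

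The repair is to move into the open flat piece rather than to its far endpoint. Pick $\eta\in(0,\min\{\varepsilon,f(x^*)-t^*\})$ and set $\bar t:=t^*+\eta$. Then $f_j(x^*)<\bar t<f_{i_0}(x^*)$ for every $j\ne i_0$, so by continuity the same strict inequalities hold for $x$ near $x^*$. This gives $F(x,\bar t)=f_{i_0}(x)=f(x)$. Since $|\bar t-t^*|<\varepsilon$, local minimality yields $f(x)=F(x,\bar t)\ge F(x^*,t^*)=f(x^*)$.

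Alternatively, you can carry out the first plan you mention, which avoids the case split entirely. The minimizing set of $F(x,\cdot)$ is $[f_{(2)}(x),f(x)]$, where $f_{(2)}(x)$ is the second largest of the values $f_i(x)$ counted with multiplicity (read as $-\infty$ if $m=1$). Both endpoints depend continuously on $x$. Hence for $x$ near $x^*$ this interval contains a point $t_x$ with $|t_x-t^*|<\varepsilon$, and $f(x)=F(x,t_x)\ge f(x^*)$.

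Your use of continuity of the $f_i$ is legitimate in the paper's setting and is genuinely needed for (ii), even though the printed statement does not list it.
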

	% ============================================================
	\section{Hyperbolic Smoothing}
	\label{sec:hyperbolic}
	% ============================================================
	This section introduces the hyperbolic smoothing function of $F$,
	establishes its smoothness properties, and presents its approximation
	and stationarity relationships with $F$.
	
	Applying \eqref{eq:phi} to the plus terms in $F$ gives the hyperbolic
	smoothing function of $F$
	\begin{equation}
		\label{eq:full-smoothing}
		\Phi_\tau(x,t):=t+\sum_{i\in\Iset}\phi_\tau(f_i(x)-t).
	\end{equation}
	For $r_{i,\tau}(x,t):=f_i(x)-t$, define
	\[
	q_i(x):=\begin{pmatrix}\grad f_i(x)\\-1\end{pmatrix},\qquad
	D_i(x):=\begin{pmatrix}\hess f_i(x)&0\\0&0\end{pmatrix},\qquad
	\beta_{i,\tau}(x,t):=\frac{r_{i,\tau}(x,t)}
	{\sqrt{r_{i,\tau}(x,t)^2+\tau^2}},
	\]
	and
	\[
	\vartheta_{i,\tau}(x,t):=
	\frac{\tau^2}{\bigl(r_{i,\tau}(x,t)^2+\tau^2\bigr)^{3/2}}.
	\]
	Whenever the displayed derivatives exist, direct differentiation gives
	\begin{align}
		\label{eq:full-gradient}
		\grad\Phi_\tau(x,t)
		&=
		\begin{pmatrix}
			\displaystyle\frac12\sum_{i\in\Iset}
			\bigl(1+\beta_{i,\tau}(x,t)\bigr)\grad f_i(x)\\[1mm]
			\displaystyle 1-\frac{m}{2}-\frac12\sum_{i\in\Iset}
			\beta_{i,\tau}(x,t)
		\end{pmatrix},\\
		\label{eq:full-hessian}
		\hess\Phi_\tau(x,t)
		&=\frac12\sum_{i\in\Iset}
		\left[\bigl(1+\beta_{i,\tau}(x,t)\bigr)D_i(x)
		+\vartheta_{i,\tau}(x,t)q_i(x)q_i(x)^T\right].
	\end{align}
	
	According to~\cite[Proposition~5]{Bagirov2013}, the smoothing error satisfies
	\begin{equation}
		\label{eq:smoothing-gap}
		0<\Phi_\tau(x,t)-F(x,t)\le\frac{m\tau}{2}.
	\end{equation}
	
	\begin{proposition}[{\cite[Proposition~6]{Bagirov2013}}]
		\label{prop:fixed-gradient-consistency}
		Suppose that every $f_i$ is continuously differentiable.  For every fixed
		$(x,t)\in\R^{n+1}$, the limit
		\[
		v(x,t):=\lim_{\tau\downarrow0}\grad\Phi_\tau(x,t)
		\]
		exists and satisfies
		\[
		v(x,t)\in\clarke F(x,t).
		\]
	\end{proposition}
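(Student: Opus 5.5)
The plan is to compute the limit componentwise from \eqref{eq:full-gradient} and then identify it as a convex combination of the generators of $\clarke F(x,t)$. Fix $(x,t)$ and put $r_i:=f_i(x)-t$. For each $i$, as $\tau\downarrow0$ the weight $\beta_{i,\tau}(x,t)=r_i/\sqrt{r_i^2+\tau^2}$ tends to $\operatorname{sign}(r_i)$ when $r_i\neq0$, and it equals $0$ for every $\tau>0$ when $r_i=0$. Thus $\beta_{i,\tau}\to\bar\beta_i$, where $\bar\beta_i=1$ if $r_i>0$, $\bar\beta_i=-1$ if $r_i<0$, and $\bar\beta_i=0$ if $r_i=0$. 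Since the $\grad f_i(x)$ do not depend on $\tau$, \eqref{eq:full-gradient} gives that the limit $v(x,t)$ exists and equals
\[
v(x,t)=\begin{pmatrix}\frac12\sum_{i}(1+\bar\beta_i)\grad f_i(x)\\ 1-\frac m2-\frac12\sum_i\bar\beta_i\end{pmatrix}.
\]

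The next step is to describe $\clarke F(x,t)$. Write $F=t+\sum_i\theta(r_i(x,t))$. Each term $\theta\circ r_i$ is a maximum of the two $C^1$ functions $0$ and $f_i(x)-t$, so the finite-max rule \eqref{eq:clarke-max}, applied in $\R^{n+1}$, gives
\[
\clarke(\theta\circ r_i)(x,t)=\begin{cases}\{q_i(x)\}, & r_i>0,\\ \{0\}, & r_i<0,\\ \co\{0,q_i(x)\}=\{\lambda q_i(x):\lambda\in[0,1]\}, & r_i=0.\end{cases}
\]
The Clarke sum rule gives the inclusion $\clarke F(x,t)\subseteq e_{n+1}+\sum_i\clarke(\theta\circ r_i)(x,t)$, where $e_{n+1}=(0_n,1)$. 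An inclusion in this direction does not yet place $v$ inside $\clarke F$, so equality is needed. Each $\theta\circ r_i$ is a maximum of $C^1$ functions and hence Clarke regular, and a sum of regular functions satisfies the sum rule with equality. Therefore
\[
\clarke F(x,t)=e_{n+1}+\sum_i\lambda_i q_i(x),\qquad \lambda_i=1 \text{ if } r_i>0,\quad \lambda_i=0 \text{ if } r_i<0,\quad \lambda_i\in[0,1] \text{ if } r_i=0,
\]
with the right-hand side understood as the set of all such combinations. Alternatively, one can avoid regularity by writing $F$ directly as a single finite max of $C^1$ functions, $F=\max_{S\subseteq\Iset}\bigl(t+\sum_{i\in S}(f_i(x)-t)\bigr)$, and applying \eqref{eq:clarke-max}.

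To conclude, choose $\lambda_i=\frac12(1+\bar\beta_i)$, which gives $\lambda_i=1$, $0$, and $\tfrac12$ in the three cases, all admissible. Then
\[
e_{n+1}+\sum_i\lambda_iq_i=\begin{pmatrix}\frac12\sum_i(1+\bar\beta_i)\grad f_i(x)\\ 1-\frac12\sum_i(1+\bar\beta_i)\end{pmatrix},
\]
and the last component equals $1-\frac m2-\frac12\sum_i\bar\beta_i$. This matches $v(x,t)$, so $v(x,t)\in\clarke F(x,t)$.

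I expect the main obstacle to be justifying equality, or at least the needed inclusion, in the subdifferential sum rule. I would settle it with the max-over-subsets representation: the active pieces at $(x,t)$ are exactly the subsets $S$ that contain $\{i:r_i>0\}$ and are contained in $\{i:r_i\ge0\}$. The convex hull of their gradients is precisely the set of combinations with the $\lambda_i$ constrained as above.
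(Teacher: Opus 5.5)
Your proposal is correct and follows essentially the same route as the paper. Both arguments show that the limiting weights $\tfrac12\bigl(1+\beta_{i,\tau}(x,t)\bigr)$ tend to $1$, $\tfrac12$, $0$ according as $f_i(x)-t$ is positive, zero, or negative, and identify the limit with the element of $\partial F(x,t)$ that takes $\lambda_i=\tfrac12$ on the switching indices. You go further in two places: you justify explicitly (via Clarke regularity or the max-over-subsets representation) that $\partial F(x,t)=(0_n,1)+\sum_i\partial\max\{0,f_i(x)-t\}$, which the paper uses without comment, and you state the correct switching-set subdifferential $\co\{0,q_i(x)\}$ where the paper writes $\co\{q_i(x)\}$.
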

	
	\begin{proof}
		Set
		\[
		a_{i,\tau}(x,t):=
		\frac12\bigl(1+\beta_{i,\tau}(x,t)\bigr).
		\]
		Then \eqref{eq:full-gradient} can be written as
		\[
		\grad\Phi_\tau(x,t)
		=
		\left(
		\sum_{i\in\Iset}a_{i,\tau}(x,t)\grad f_i(x),\,
		1-\sum_{i\in\Iset}a_{i,\tau}(x,t)
		\right).
		\]
		Since $(x,t)$ is fixed,
		\[
		a_{i,\tau}(x,t)\longrightarrow
		\begin{cases}
			0, & f_i(x)<t,\\[1mm]
			\dfrac12, & f_i(x)=t,\\[1mm]
			1, & f_i(x)>t,
		\end{cases}
		\qquad \tau\downarrow0.
		\]
		On the other hand, for
		$\psi_i(x,t):=\max\{0,f_i(x)-t\}$,
		\[
		\clarke\psi_i(x,t)=
		\begin{cases}
			\{0_{n+1}\}, & f_i(x)<t,\\
			\textbf{co}\{q_i(x)\}, & f_i(x)=t,\\
			\{q_i(x)\}, & f_i(x)>t.
		\end{cases}
		\]
		Because
		$F(x,t)=t+\sum_{i\in\Iset}\psi_i(x,t)$,
		the limiting coefficients above select $\lambda=1/2$ whenever
		$f_i(x)=t$.  Hence the limit of $\grad\Phi_\tau(x,t)$ is an element of
		$\clarke F(x,t)$.
	\end{proof}
	
	\begin{proposition}[{\cite[Proposition~8]{Bagirov2013}}]
		\label{prop:gradient-local-lipschitz}
		Suppose that every $f_i$ is continuously differentiable and
		$\grad f_i$ is locally Lipschitz. Then, for every fixed $\tau>0$,
		$\grad\Phi_\tau$ is locally Lipschitz on $\R^{n+1}$.
	\end{proposition}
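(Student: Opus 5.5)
We prove that, for fixed $\tau>0$, the gradient \eqref{eq:full-gradient} is locally Lipschitz. Writing $a_{i,\tau}(x,t)=\tfrac12\bigl(1+\beta_{i,\tau}(x,t)\bigr)$, we have $\grad\Phi_\tau(x,t)=\bigl(\sum_i a_{i,\tau}\grad f_i(x),\,1-\sum_i a_{i,\tau}\bigr)$. The argument reduces local Lipschitz continuity of $\grad\Phi_\tau$ to that of each building block on a compact neighbourhood. It then combines these blocks through sums and products of bounded locally Lipschitz maps.

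Fix $\bar z=(\bar x,\bar t)$ and a closed ball $\mathcal B$ around it. On $\mathcal B$ we collect three facts.
\begin{enumerate}[label=(\alph*)]
\item Each $f_i$ is $C^1$, so $\grad f_i$ is bounded on the compact projection of $\mathcal B$, say by $\gamma_{\mathcal B}$. By the mean value theorem, $f_i$ is then Lipschitz there with constant $\gamma_{\mathcal B}$. Hence $r_{i,\tau}(x,t)=f_i(x)-t$ is Lipschitz on $\mathcal B$ with constant $\gamma_{\mathcal B}+1$.
\item By hypothesis, and after shrinking $\mathcal B$ or covering it by finitely many balls, $\grad f_i$ is Lipschitz on $\mathcal B$ with some constant $\omega_{\mathcal B}$.
\item The scalar map $h_\tau(r)=r/\sqrt{r^2+\tau^2}$ satisfies $|h_\tau|\le1$ and $h_\tau'(r)=\tau^2/(r^2+\tau^2)^{3/2}\le 1/\tau$. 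So $h_\tau$ is globally Lipschitz with constant $1/\tau$.
\end{enumerate}

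Composing (a) and (c), $\beta_{i,\tau}=h_\tau\circ r_{i,\tau}$ is Lipschitz on $\mathcal B$ with constant $(\gamma_{\mathcal B}+1)/\tau$. Consequently $a_{i,\tau}$ is Lipschitz with constant $(\gamma_{\mathcal B}+1)/(2\tau)$ and satisfies $0\le a_{i,\tau}\le1$.

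For the product terms we use the elementary split
\[
a(z)g(z)-a(z')g(z')=a(z)\bigl(g(z)-g(z')\bigr)+\bigl(a(z)-a(z')\bigr)g(z').
\]
Applied with $a=a_{i,\tau}$ and $g=\grad f_i$, it gives
\[
\norm{a_{i,\tau}\grad f_i(z)-a_{i,\tau}\grad f_i(z')}\le\Bigl(\omega_{\mathcal B}+\frac{\gamma_{\mathcal B}(\gamma_{\mathcal B}+1)}{2\tau}\Bigr)\norm{z-z'}.
\]
The $t$-component $1-\sum_i a_{i,\tau}$ is Lipschitz with constant $m(\gamma_{\mathcal B}+1)/(2\tau)$. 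Summing over $i\in\Iset$ yields a Lipschitz constant of order $m\omega_{\mathcal B}+\frac{m}{2\tau}(\gamma_{\mathcal B}^2+1)$, up to harmless constants. This is consistent with \eqref{eq:tau-dependent-L-intro}.

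The only point requiring care is (b): passing from "locally Lipschitz" to "Lipschitz on a compact set". I would handle it by working with a single small ball on which every $\grad f_i$ is Lipschitz; this neighbourhood exists because the index set is finite. Local Lipschitz continuity only needs such a neighbourhood of each point, so no covering argument is actually necessary.
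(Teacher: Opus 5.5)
Your proof is correct and follows the same route as the paper: write $\beta_{i,\tau}=h_\tau\circ r_{i,\tau}$ with $h_\tau$ globally $1/\tau$-Lipschitz, deduce local Lipschitz continuity of $\beta_{i,\tau}$ from that of the residual $(x,t)\mapsto f_i(x)-t$, and combine with the locally Lipschitz $\grad f_i$ through \eqref{eq:full-gradient}. The only difference is that you spell out the product step, using the boundedness of $a_{i,\tau}$ and of $\grad f_i$ on a small ball, and you track the resulting Lipschitz constant; the paper leaves both implicit.
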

	
	\begin{proof}
		By \eqref{eq:full-gradient} and the definition of $q_i$,
		\[
		\grad\Phi_\tau(x,t)
		=
		\begin{pmatrix}0_n\\1\end{pmatrix}
		+
		\frac12\sum_{i\in\Iset}
		\bigl(1+\beta_{i,\tau}(x,t)\bigr)q_i(x).
		\]
		For fixed $\tau>0$, let
		\[
		h_\tau(r):=\frac{r}{\sqrt{r^2+\tau^2}}.
		\]
		Since
		\[
		|h_\tau'(r)|
		=\frac{\tau^2}{(r^2+\tau^2)^{3/2}}
		\le \frac1{\tau},
		\]
		$h_\tau$ is Lipschitz. Moreover,
		$(x,t)\mapsto f_i(x)-t$ is locally Lipschitz, and hence
		$\beta_{i,\tau}$ is locally Lipschitz.
		Since $\grad f_i$ is locally Lipschitz, it follows from
		\eqref{eq:full-gradient} that $\grad\Phi_\tau$ is locally Lipschitz.
	\end{proof}

	\begin{proposition}%[{\cite[Proposition~9]{Bagirov2013}}]
		\label{prop:hessian-bound}
		Suppose that every $f_i$ is twice continuously differentiable. Let
		$\mathcal Q\subset\R^{n+1}$ be bounded and set
		\[
		\widetilde{\mathcal Q}:=\co(\overline{\mathcal Q}),\qquad
		\gamma_{\mathcal Q}:=
		\max_{(x,t)\in\widetilde{\mathcal Q}}\gamma(x),\qquad
		\omega_{\mathcal Q}:=
		\max_{(x,t)\in\widetilde{\mathcal Q}}\omega(x).
		\]
		Then, for every fixed $\tau>0$,
		\begin{equation}
			\label{eq:tau-dependent-hessian-bound}
			\left|
			\left\langle
			u,\hess\Phi_\tau(z)u
			\right\rangle
			\right|
			\le
			L_{\mathcal Q,\tau}\norm{u}^2,
			\qquad
			z\in\widetilde{\mathcal Q},\quad u\in\R^{n+1},
		\end{equation}
		where
		\begin{equation}
			\label{eq:tau-dependent-L}
			L_{\mathcal Q,\tau}
			:=
			m\omega_{\mathcal Q}
			+\frac{m}{2\tau}
			\bigl(\gamma_{\mathcal Q}^2+1\bigr).
		\end{equation}
		In particular,
		\[
		\norm{\hess\Phi_\tau(z)}
		\le L_{\mathcal Q,\tau},
		\qquad z\in\widetilde{\mathcal Q}.
		\]
	\end{proposition}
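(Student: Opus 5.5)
The bound follows by estimating the quadratic form of the Hessian \eqref{eq:full-hessian} term by term at an arbitrary $z=(x,t)\in\widetilde{\mathcal Q}$ and $u=(u_x,u_t)\in\R^{n+1}$. The constants $\gamma_{\mathcal Q}$ and $\omega_{\mathcal Q}$ are finite: $\widetilde{\mathcal Q}$ is the convex hull of a compact set, hence compact, and $\gamma,\omega$ are continuous because every $f_i$ is $C^2$. From \eqref{eq:full-hessian},
\[
\langle u,\hess\Phi_\tau(z)u\rangle
=\frac12\sum_{i\in\Iset}\Bigl[\bigl(1+\beta_{i,\tau}\bigr)\langle u_x,\hess f_i(x)u_x\rangle+\vartheta_{i,\tau}\,\langle q_i(x),u\rangle^2\Bigr],
\]
using $\langle u,D_i(x)u\rangle=\langle u_x,\hess f_i(x)u_x\rangle$. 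By the triangle inequality it suffices to bound each of the two summands in absolute value.

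For the first summand, $|\beta_{i,\tau}|\le 1$ gives $0\le 1+\beta_{i,\tau}\le 2$. Together with $|\langle u_x,\hess f_i(x)u_x\rangle|\le\omega(x)\norm{u_x}^2\le\omega_{\mathcal Q}\norm{u}^2$, each index contributes at most $\frac12\cdot2\,\omega_{\mathcal Q}\norm{u}^2=\omega_{\mathcal Q}\norm{u}^2$. Summing over the $m$ indices yields the term $m\omega_{\mathcal Q}\norm{u}^2$.

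For the second summand:
\begin{itemize}
\item Since $\vartheta_{i,\tau}=\tau^2/(r^2+\tau^2)^{3/2}$ is maximized at $r=0$, we have $0\le\vartheta_{i,\tau}\le 1/\tau$.
\item By Cauchy--Schwarz, $\langle q_i(x),u\rangle^2\le\norm{q_i(x)}^2\norm{u}^2=(\norm{\grad f_i(x)}^2+1)\norm{u}^2\le(\gamma_{\mathcal Q}^2+1)\norm{u}^2$.
\end{itemize}
Each index therefore contributes at most $\frac{1}{2\tau}(\gamma_{\mathcal Q}^2+1)\norm{u}^2$, and summing over $m$ indices gives $\frac{m}{2\tau}(\gamma_{\mathcal Q}^2+1)\norm{u}^2$. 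Adding the two estimates yields \eqref{eq:tau-dependent-hessian-bound} with $L_{\mathcal Q,\tau}$ as in \eqref{eq:tau-dependent-L}.

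The spectral-norm statement follows because $\hess\Phi_\tau(z)$ is symmetric. For a symmetric matrix, $\norm{A}=\max_{\norm{u}=1}|\langle u,Au\rangle|$, the largest absolute eigenvalue. Taking the supremum of the quadratic-form bound over unit vectors $u$ then gives $\norm{\hess\Phi_\tau(z)}\le L_{\mathcal Q,\tau}$.

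There is no real obstacle. The only points needing care are the uniform bound $\vartheta_{i,\tau}\le1/\tau$ and the remark that symmetry lets the quadratic-form bound control the operator norm. No sign information on the curvature is used, so the absolute-value form is immediate.
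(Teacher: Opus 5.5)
Your proof is correct and follows the paper's argument essentially step for step: you use the same two-term expansion of the quadratic form from \eqref{eq:full-hessian}, the bounds $0\le 1+\beta_{i,\tau}\le 2$ and $0\le\vartheta_{i,\tau}\le 1/\tau$, Cauchy--Schwarz on $\langle q_i(x),u\rangle^2$, and then symmetry to pass to the operator norm. Your added remark that $\widetilde{\mathcal Q}$ is compact, so that $\gamma_{\mathcal Q}$ and $\omega_{\mathcal Q}$ are finite maxima, is a small justification that the paper leaves implicit.
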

	
	\begin{proof}
		Let $z=(x,t)\in\widetilde{\mathcal Q}$ and write
		$u=(v,\sigma)\in\R^n\times\R$. From
		\eqref{eq:full-hessian},
		\[
		\left\langle u,\hess\Phi_\tau(z)u\right\rangle
		=\frac12\sum_{i\in\Iset}
		\bigl(1+\beta_{i,\tau}(x,t)\bigr)
		\left\langle v,\hess f_i(x)v\right\rangle
		+\frac12\sum_{i\in\Iset}
		\vartheta_{i,\tau}(x,t)
		\left\langle q_i(x),u\right\rangle^2 .
		\]
		Because
		\[
		0<1+\beta_{i,\tau}(x,t)<2,
		\qquad
		0\le\vartheta_{i,\tau}(x,t)\le\frac1{\tau},
		\]
		and, by \eqref{eq:gamma-omega},
		\[
		\left|
		\left\langle v,\hess f_i(x)v\right\rangle
		\right|
		\le
		\omega_{\mathcal Q}\norm{v}^2,
		\]
		\[
		\left\langle q_i(x),u\right\rangle^2
		\le
		\norm{q_i(x)}^2\norm{u}^2
		\le
		\bigl(\gamma_{\mathcal Q}^2+1\bigr)\norm{u}^2,
		\]
		we obtain
		\[
		\left|
		\left\langle u,\hess\Phi_\tau(z)u\right\rangle
		\right|
		\le
		\left(
		m\omega_{\mathcal Q}
		+\frac{m}{2\tau}
		\bigl(\gamma_{\mathcal Q}^2+1\bigr)
		\right)\norm{u}^2.
		\]
		This proves \eqref{eq:tau-dependent-hessian-bound}. Since
		$\hess\Phi_\tau(z)$ is symmetric, the same constant bounds its
		operator norm.
	\end{proof}
	
	\begin{remark}
		Proposition~\ref{prop:hessian-bound} gives a uniform bound for the
		Hessian on bounded sets; it does not assert that $\hess\Phi_\tau$ itself
		is Lipschitz. The latter is a stronger local regularity condition and is
		imposed separately only where it is needed for the local superlinear
		BFGS analysis.
	\end{remark}
	
	\begin{corollary}
		\label{cor:stationarity-lift}
		Suppose that every $f_i$ is continuously differentiable. If
		$(x_j,t_j)\to(x^*,t^*)$, $\tau_j\downarrow0$, and
		\[
		\grad\Phi_{\tau_j}(x_j,t_j)\to0_{n+1},
		\]
		then
		\begin{equation}
			\label{eq:F-stationarity-lift}
			0_{n+1}\in\partial F(x^*,t^*).
		\end{equation}
		Consequently,
		\begin{equation}
			\label{eq:stationarity-lift}
			0_n\in\partial f(x^*).
		\end{equation}
	\end{corollary}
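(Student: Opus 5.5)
The plan is to prove the first inclusion \eqref{eq:F-stationarity-lift} by passing to the limit in the coefficient representation from the proof of Proposition~\ref{prop:fixed-gradient-consistency}. The second inclusion \eqref{eq:stationarity-lift} then follows at once from Proposition~\ref{prop:f-F-stationarity}(ii).

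As in that proof, set $a_{i,j}:=\tfrac12\bigl(1+\beta_{i,\tau_j}(x_j,t_j)\bigr)\in(0,1)$. Then
\[
\grad\Phi_{\tau_j}(x_j,t_j)=\Bigl(\textstyle\sum_{i\in\Iset}a_{i,j}\grad f_i(x_j),\;1-\sum_{i\in\Iset}a_{i,j}\Bigr).
\]
The vector $(a_{1,j},\ldots,a_{m,j})$ lies in the compact cube $[0,1]^m$, so after passing to a subsequence we get $a_{i,j}\to a_i^*\in[0,1]$ for every $i$. Continuity of $\grad f_i$ and the hypothesis $\grad\Phi_{\tau_j}(x_j,t_j)\to0_{n+1}$ then give the two limit identities
\[
\textstyle\sum_i a_i^*\grad f_i(x^*)=0_n,\qquad \sum_i a_i^*=1 .
\]

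Next, identify the limits $a_i^*$ through the sign of $r_i^*:=f_i(x^*)-t^*$. This is the main point, because $(x_j,t_j)$ and $\tau_j$ vary simultaneously, so the pointwise limits used in Proposition~\ref{prop:fixed-gradient-consistency} are not directly available.
\begin{itemize}
\item If $r_i^*>0$, then $r_{i,\tau_j}(x_j,t_j)\to r_i^*>0$ and $\tau_j\to0$. Hence $\beta_{i,\tau_j}\to1$, so $a_i^*=1$.
\item If $r_i^*<0$, the same argument gives $\beta_{i,\tau_j}\to-1$, so $a_i^*=0$.
\item If $r_i^*=0$, we only know $a_i^*\in[0,1]$. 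That is enough, since $\clarke\psi_i(x^*,t^*)=\co\{0_{n+1},q_i(x^*)\}$ contains $a_i^*q_i(x^*)$.
\end{itemize}
Therefore $a_i^*q_i(x^*)\in\clarke\psi_i(x^*,t^*)$ for every $i$.

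Now use the sum rule $\clarke F\subseteq(0_n,1)+\sum_i\clarke\psi_i$. It yields
\[
(0_n,1)+\textstyle\sum_i a_i^*q_i(x^*)\in\clarke F(x^*,t^*).
\]
By the two limit identities, this vector equals $0_{n+1}$, which proves \eqref{eq:F-stationarity-lift}.

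As a consistency check, consider $\sum_i a_i^*=1$ together with the fact that indices with $r_i^*>0$ contribute $1$ each. This forces at most one such index, so $t^*\ge f(x^*)$ up to that case. One could alternatively argue directly with the convex combination $a^*$ supported on $\{i:f_i(x^*)\ge t^*\}$, but the route through Proposition~\ref{prop:f-F-stationarity}(ii) avoids this.

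The expected main obstacle is the boundary case $r_i^*=0$, where $\beta_{i,\tau_j}$ may converge to any value in $[-1,1]$ depending on the relative rates of $r_{i,\tau_j}(x_j,t_j)$ and $\tau_j$. The argument handles this by using only membership of $a_i^*$ in $[0,1]$, which the full segment $\clarke\psi_i=\co\{0_{n+1},q_i(x^*)\}$ accommodates.
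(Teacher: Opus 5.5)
Your argument follows the paper's proof step for step: the coefficients $a_{i,j}$, a convergent subsequence in $[0,1]^m$, the three-case identification of $a_i^*$ by the sign of $f_i(x^*)-t^*$, and the final appeal to Proposition~\ref{prop:f-F-stationarity}(ii). Those steps are sound. Your boundary-case formula $\clarke\psi_i(x^*,t^*)=\co\{0_{n+1},q_i(x^*)\}$ is also the correct one.

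\textbf{The gap.} The step that fails as written is the passage to membership in $\clarke F(x^*,t^*)$. The sum rule you invoke, $\clarke F\subseteq(0_n,1)+\sum_i\clarke\psi_i$, only says that every element of $\clarke F$ admits such a decomposition. It does not say that a given element of the right-hand side, such as $(0_n,1)+\sum_i a_i^*q_i(x^*)$, lies in $\clarke F(x^*,t^*)$. That requires the reverse inclusion, which fails for general locally Lipschitz summands. For example, $|x|+(-|x|)\equiv0$ has subdifferential $\{0\}$ at $0$, while the sum of the two subdifferentials is $[-2,2]$.

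\textbf{The repair.} The reverse inclusion does hold here, but you must say why. One option is to note that each $\psi_i$ is a pointwise maximum of the two $C^1$ functions $0$ and $f_i(x)-t$, hence Clarke regular, so the sum rule holds with equality. A more self-contained option is to write
\[
F(x,t)=\max_{S\subseteq\Iset}\Bigl\{t+\sum_{i\in S}\bigl(f_i(x)-t\bigr)\Bigr\}
\]
and apply the finite-max rule \eqref{eq:clarke-max} to this family of $C^1$ functions. The active index sets at $(x^*,t^*)$ are exactly those $S$ with $\{i:r_i^*>0\}\subseteq S\subseteq\{i:r_i^*\ge0\}$, and their gradients are $(0_n,1)+\sum_{i\in S}q_i(x^*)$. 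The convex hull of these gradients is precisely the set of vectors $(0_n,1)+\sum_i\lambda_iq_i(x^*)$ with $\lambda_i=1$ for $r_i^*>0$, $\lambda_i=0$ for $r_i^*<0$, and $\lambda_i\in[0,1]$ for $r_i^*=0$. This set contains your limit vector.

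With this inserted, your proof is complete. The paper's own proof relies on the same equality implicitly when it calls the limiting coefficients ``exactly the admissible coefficients'' and concludes membership in $\partial F$.
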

	
	\begin{proof}
		Set
		\[
		a_{i,j}:=
		\frac12\bigl(1+\beta_{i,\tau_j}(x_j,t_j)\bigr).
		\]
		Then
		\[
		\grad\Phi_{\tau_j}(x_j,t_j)
		=
		\begin{pmatrix}0_n\\1\end{pmatrix}
		+
		\sum_{i\in\Iset}
		a_{i,j}q_i(x_j).
		\]
		Passing to a subsequence if necessary, let
		$a_{i,j}\to a_i\in[0,1]$. From
		$(x_j,t_j)\to(x^*,t^*)$ and $\tau_j\downarrow0$,
		\[
		a_i=
		\begin{cases}
			0, & f_i(x^*)<t^*,\\
			[0,1], & f_i(x^*)=t^*,\\
			1, & f_i(x^*)>t^*,
		\end{cases}
		\]
		where the middle line means simply $a_i\in[0,1]$.
		These are exactly the admissible coefficients in the Clarke
		subdifferential of the plus term
		$\max\{0,f_i(x)-t\}$. Therefore
		\[
		\begin{pmatrix}0_n\\1\end{pmatrix}
		+\sum_{i\in\Iset}a_iq_i(x^*)
		\in\partial F(x^*,t^*).
		\]
		Taking limits in
		$\grad\Phi_{\tau_j}(x_j,t_j)\to0_{n+1}$ gives
		\eqref{eq:F-stationarity-lift}. Proposition~
		\ref{prop:f-F-stationarity}~(ii) then yields
		\eqref{eq:stationarity-lift}.
	\end{proof}
	
	% ============================================================
	\section{Hyperbolic Majorization Preconditioner}
	\label{sec:preconditioner}
	% ============================================================
	The curvature of the scalar smoothing term is largest at a switching
	surface: \(\phi_\tau''(0)=1/(2\tau)\).  Thus the Euclidean conditioning of
	\(\Phi_\tau\) can deteriorate when \(\tau\downarrow0\).  We use the following
	problem-derived metric in the L-BFGS initial matrix.  It is not an additional
	heuristic diagonal scaling.
	
	Let \(z=(x,t)\), \(r_i=f_i(x)-t\), \(\omega_i=(r_i^2+\tau^2)^{1/2}\), and
	let \(J(z)\in\R^{m\times(n+1)}\) have row \(q_i(x)^T\).  Put
	\[
	\Pi_x:=\begin{pmatrix}I_n&0\\0&0\end{pmatrix},\qquad
	D^{\rm hyp}(z,\tau):=\diag(\omega_1^{-1},\ldots,\omega_m^{-1}).
	\]
	Suppose that on a set containing the accepted iterates and every Armijo trial
	segment, \(\nabla f_i\) is \(L_i\)-Lipschitz, and set
	\(\bar L:=\sum_{i\in\Iset}L_i\).  For \(\delta>0\), define
	\begin{equation}
		\label{eq:Pk}
		P(z,\tau):=\delta I_{n+1}+\bar L\Pi_x+
		\frac12J(z)^TD^{\rm hyp}(z,\tau)J(z).
	\end{equation}
	At the \(\ell\)-th inner iteration of layer \(k\), we write
	\(P_k^\ell=P(z_k^\ell,\tau_k)\).  The code constructs exactly
	\eqref{eq:Pk}: its augmented Jacobian has columns
	\([\nabla f_i(x)^T,-1]^T\).
	
	\begin{lemma}
		\label{prop:preconditioner-spd}
		For every \(z\) and \(\tau>0\), \(P(z,\tau)\succeq\delta I_{n+1}\succ0\).
		If the components are twice continuously differentiable at \(x\), then
		\begin{equation}
			\label{eq:preconditioner-loewner}
			P(z,\tau)-\hess\Phi_\tau(z)\succeq\delta I_{n+1}.
		\end{equation}
	\end{lemma}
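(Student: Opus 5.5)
The first claim follows because each summand beyond \(\delta I_{n+1}\) in \eqref{eq:Pk} is positive semidefinite. Indeed, \(\Pi_x\) is an orthogonal projector, so \(\bar L\Pi_x\succeq0\) since \(\bar L\ge0\). Moreover, \(D^{\rm hyp}(z,\tau)\) is diagonal with entries \(\omega_i^{-1}>0\), where \(\omega_i\ge\tau>0\), so
\[
\tfrac12J^TD^{\rm hyp}J=\tfrac12\sum_{i\in\Iset}\omega_i^{-1}q_i(x)q_i(x)^T\succeq0.
\]
Hence \(P(z,\tau)\succeq\delta I_{n+1}\succ0\) for every \(z\) and \(\tau>0\).

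For \eqref{eq:preconditioner-loewner}, I will compare quadratic forms term by term using \eqref{eq:full-hessian}. For \(u=(v,\sigma)\) I write
\[
\langle u,(P-\delta I-\hess\Phi_\tau)u\rangle
=\Bigl[\bar L\norm{v}^2-\tfrac12\sum_i(1+\beta_{i,\tau})\langle v,\hess f_i(x)v\rangle\Bigr]
+\tfrac12\sum_i\Bigl(\omega_i^{-1}-\vartheta_{i,\tau}\Bigr)\langle q_i(x),u\rangle^2 .
\]
The second bracket is nonnegative because
\[
\vartheta_{i,\tau}=\tau^2/\omega_i^3\le\omega_i^2/\omega_i^3=\omega_i^{-1},
\]
using \(\tau^2\le\omega_i^2\), as noted in the introduction.

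For the first bracket I use \(0<1+\beta_{i,\tau}<2\), so that
\[
\tfrac12(1+\beta_{i,\tau})\langle v,\hess f_i v\rangle\le\norm{\hess f_i(x)}\norm{v}^2 .
\]
The key fact is then \(\norm{\hess f_i(x)}\le L_i\), which holds at any \(x\) where \(f_i\) is \(C^2\) and \(\nabla f_i\) is \(L_i\)-Lipschitz on a neighborhood of \(x\): the Hessian is the limit of difference quotients \((\nabla f_i(x+h w)-\nabla f_i(x))/h\), whose norms are bounded by \(L_i\norm{w}\). Summing over \(i\) gives \(\sum_i\norm{\hess f_i}\le\bar L\), so the first bracket is \(\ge0\).

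The main obstacle is precisely this Hessian bound. The Lipschitz hypothesis is stated only on a set containing the iterates and the trial segments, which may fail to be a neighborhood of \(x\). I would interpret the lemma as applying where that set contains a neighborhood of \(x\), or else where it contains segments in all directions through \(x\), which suffices for the difference-quotient argument in every direction \(w\). Failing either, I would state the Lipschitz bound as holding on an open set. With that caveat in place, the remaining steps are routine.
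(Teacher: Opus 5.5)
Your proposal is correct and follows the paper's proof: positive semidefiniteness of \(\bar L\Pi_x\) and \(\tfrac12J^TD^{\rm hyp}J\) gives the first claim, and the Loewner bound follows term by term from \(0<\tfrac12(1+\beta_{i,\tau})<1\), \(\hess f_i(x)\preceq L_iI\), and \(\vartheta_{i,\tau}=\tau^2/\omega_i^3\le\omega_i^{-1}\). The paper simply asserts \(\hess f_i(x)\preceq L_iI\); your difference-quotient derivation of \(\norm{\hess f_i(x)}\le L_i\) makes that step explicit, and your caveat is legitimate: the Lipschitz set must contain a neighborhood of \(x\) (or enough segments through \(x\)), so the second claim does not hold literally for every \(z\) as stated.
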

	
	\begin{proof}
		The two nonregularizing summands in \eqref{eq:Pk} are positive semidefinite,
		which proves the first assertion.  By \eqref{eq:full-hessian}, with
		\(a_i=(1+\beta_{i,\tau})/2\) and
		\(c_i=\tau^2/(2\omega_i^3)\),
		\[
		\hess\Phi_\tau(z)=\sum_i a_iD_i(x)+\sum_i c_iq_i(x)q_i(x)^T.
		\]
		Since \(0<a_i<1\), \(\hess f_i(x)\preceq L_iI\), and
		\(c_i\le1/(2\omega_i)\), the right-hand side is bounded above by
		\(\bar L\Pi_x+J^TD^{\rm hyp}J/2\).  Adding \(\delta I_{n+1}\) gives
		\eqref{eq:preconditioner-loewner}.
	\end{proof}
	
	\begin{lemma}[Scalar hyperbolic majorization]
		\label{lem:scalar-major}
		For \(u,v\in\R\) and \(\omega=(u^2+\tau^2)^{1/2}\),
		\begin{equation}
			\label{eq:scalar-major}
			\phi_\tau(u+v)\le\phi_\tau(u)+\phi_\tau'(u)v+\frac{v^2}{4\omega}.
		\end{equation}
	\end{lemma}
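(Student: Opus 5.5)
The linear part of $\phi_\tau$ cancels on both sides of \eqref{eq:scalar-major}, so I would reduce the claim to an inequality for the square-root term alone and then verify that inequality by squaring.

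\emph{Reduction.} Write $\phi_\tau(r)=\tfrac12\bigl(r+g(r)\bigr)$ with $g(r):=\sqrt{r^2+\tau^2}$. Then $\phi_\tau'(u)=\tfrac12\bigl(1+u/\omega\bigr)$. The terms $\tfrac12(u+v)$ and $\tfrac12 u+\tfrac12 v$ on the two sides of \eqref{eq:scalar-major} cancel. After multiplying by $2$, the claim becomes
\[
\sqrt{(u+v)^2+\tau^2}\;\le\;\omega+\frac{uv}{\omega}+\frac{v^2}{2\omega}
=\frac{\omega^2+uv+\tfrac12v^2}{\omega}.
\]

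\emph{Positivity of the right-hand side.} Before squaring, I check that the numerator is positive:
\[
\omega^2+uv+\tfrac12v^2=\tau^2+\Bigl(u+\tfrac v2\Bigr)^2+\tfrac{v^2}{4}>0 .
\]
Both sides of the reduced inequality are therefore nonnegative, so it is equivalent to its square. Multiplying through by $\omega^2>0$, the squared inequality reads
\[
\omega^2\bigl((u+v)^2+\tau^2\bigr)\le\bigl(\omega^2+uv+\tfrac12v^2\bigr)^2 .
\]

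\emph{Comparing the squared forms.} Put $A:=\omega^2=u^2+\tau^2$.
\begin{itemize}
\item The left-hand side is $A\bigl(A+2uv+v^2\bigr)=A^2+A(2uv+v^2)$.
\item The right-hand side expands to $A^2+A(2uv+v^2)+\bigl(uv+\tfrac12v^2\bigr)^2$.
\end{itemize}
Their difference is $\bigl(uv+\tfrac12v^2\bigr)^2\ge0$, which proves \eqref{eq:scalar-major}.

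The only step that needs care is confirming that the right-hand side is positive before squaring; it is handled by the completed-square identity above. Everything else is direct expansion, so I expect no real obstacle.
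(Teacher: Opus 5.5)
Your proof is correct and takes essentially the paper's route: both cancel the linear part of $\phi_\tau$ and reduce the claim to $\sqrt{(u+v)^2+\tau^2}\le\omega+(2uv+v^2)/(2\omega)$. The only difference is in how that bound is justified. The paper gets it in one line from the tangent-line inequality $\sqrt w\le\omega+(w-\omega^2)/(2\omega)$ for the concave map $w\mapsto\sqrt w$, evaluated at $w=(u+v)^2+\tau^2$; since this holds for every $w\ge0$, no positivity check is needed. You instead verify the same inequality by squaring, and your slack $(uv+\tfrac12v^2)^2$ is exactly $\bigl((w-\omega^2)/2\bigr)^2$, the AM--GM gap behind that tangent line.
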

	
	\begin{proof}
		The concavity of the square-root function gives
		\[
		\sqrt{(u+v)^2+\tau^2}\le\omega+\frac{2uv+v^2}{2\omega}.
		\]
		Substitution into the definition of \(\phi_\tau\) proves the claim.
	\end{proof}
	
	\begin{theorem}
		\label{thm:global-major}
		Under the Lipschitz-gradient assumption used to define \(\bar L\), every
		\(d=(p,\eta)\in\R^{n+1}\) whose segment \(z+[0,1]d\) remains in that set
		satisfies
		\begin{equation}
			\label{eq:global-major}
			\Phi_\tau(z+d)\le\Phi_\tau(z)+\grad\Phi_\tau(z)^Td+
			\frac12d^TP(z,\tau)d.
		\end{equation}
	\end{theorem}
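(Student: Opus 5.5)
The plan is to split $\Phi_\tau(z)=t+\sum_{i\in\Iset}\phi_\tau(r_i(z))$ term by term. The linear part $t$ is reproduced exactly by its first-order expansion. Each hyperbolic term $\phi_\tau(f_i(x)-t)$ is handled by composing Lemma~\ref{lem:scalar-major} with a descent-lemma bound for $f_i$. The quadratic remainders will then be absorbed into the three summands of $P(z,\tau)$.

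Fix $i$ and write $d=(p,\eta)$, $u=r_i=f_i(x)-t$, and
\[
r_i(z+d)=f_i(x+p)-t-\eta=u+v_i,\qquad v_i:=f_i(x+p)-f_i(x)-\eta .
\]
Since the segment $x+[0,1]p$ lies in the Lipschitz region, the descent lemma gives
\[
v_i=q_i(x)^Td+e_i,\qquad |e_i|\le \tfrac{L_i}{2}\norm{p}^2 .
\]
Applying Lemma~\ref{lem:scalar-major} with $\omega_i=(u^2+\tau^2)^{1/2}$ yields
\[
\phi_\tau(u+v_i)\le\phi_\tau(u)+\phi_\tau'(u)v_i+\frac{v_i^2}{4\omega_i}.
\]
Here $\phi_\tau'(u)=a_i=(1+\beta_{i,\tau})/2\in(0,1)$. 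Using $e_i\le \tfrac{L_i}{2}\norm{p}^2$ and $a_i\ge0$, the linear term splits as
\[
a_iv_i\le a_iq_i^Td+\tfrac{L_i}{2}\norm{p}^2 .
\]
Summing over $i$ and adding $t+\eta$, the first-order terms assemble into $\grad\Phi_\tau(z)^Td$ by \eqref{eq:full-gradient}, and the $e_i$ contributions give at most $\tfrac12\bar L\norm{p}^2=\tfrac12 d^T(\bar L\Pi_x)d$.

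The quadratic term $\sum_i v_i^2/(4\omega_i)$ must be compared with $\tfrac12 d^T\bigl(\tfrac12J^TD^{\rm hyp}J\bigr)d=\sum_i (q_i^Td)^2/(4\omega_i)$. The obstacle is that $v_i$ differs from $q_i^Td$ by $e_i$, so $v_i^2$ is not bounded by $(q_i^Td)^2$. Expanding gives
\[
v_i^2=(q_i^Td)^2+2(q_i^Td)e_i+e_i^2 ,
\]
and the cross and square terms carry a factor $1/\omega_i\le1/\tau$, so they cannot be absorbed into $\bar L\Pi_x$ uniformly in $\tau$.

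I would not bound $v_i^2$ directly. Instead, I would sharpen the combination of the two majorizations using monotonicity and convexity of $\phi_\tau$ (Proposition~\ref{prop:phi-properties}). Since $\phi_\tau$ is increasing and $f_i(x+p)\le f_i(x)+\grad f_i(x)^Tp+\tfrac{L_i}{2}\norm{p}^2$, we have
\[
\phi_\tau(u+v_i)\le\phi_\tau\bigl(u+q_i^Td+\tfrac{L_i}{2}\norm{p}^2\bigr).
\]
Now apply Lemma~\ref{lem:scalar-major} along the direction $q_i^Td$ only, and treat the extra shift $\tfrac{L_i}{2}\norm{p}^2$ via the global Lipschitz bound $0<\phi_\tau'\le1$:
\[
\phi_\tau(w+c)\le\phi_\tau(w)+c\quad\text{for } c\ge0 .
\]
With $w=u+q_i^Td$ and $c=\tfrac{L_i}{2}\norm{p}^2$, this gives
\[
\phi_\tau(u+v_i)\le\phi_\tau(u)+a_iq_i^Td+\frac{(q_i^Td)^2}{4\omega_i}+\frac{L_i}{2}\norm{p}^2 .
\]
Summing over $i$ gives exactly \eqref{eq:global-major}, even without the $\tfrac{\delta}{2}\norm{d}^2$ slack, which is nonnegative and only adds room.

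The main step to check is this ordering: first shift the argument upward using monotonicity, then apply the scalar majorization to the exact linear piece, then pay for the curvature shift at slope at most one. This ordering avoids all $1/\tau$ cross terms.
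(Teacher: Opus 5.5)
Your final argument is correct and is the paper's proof: shift the argument up using the descent lemma and monotonicity of $\phi_\tau$, absorb the shift $\tfrac{L_i}{2}\norm{p}^2$ through $0<\phi_\tau'\le1$, apply Lemma~\ref{lem:scalar-major} with $v=q_i(x)^Td$, then sum over $i$ and add $t+\eta$. The earlier detour through $v_i^2$ is not needed, but you correctly saw that its $1/\omega_i$ cross terms cannot be absorbed and you dropped it.
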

	
	\begin{proof}
		The component descent lemma yields
		\[
		f_i(x+p)-(t+\eta)\le r_i+q_i(x)^Td+\frac{L_i}{2}\norm p^2.
		\]
		Put \(a_i=(1+\beta_{i,\tau})/2=\phi_\tau'(r_i)\) and
		\(e_i=L_i\norm p^2/2\ge0\).  Since \(\phi_\tau\) is increasing and
		\(1\)-Lipschitz, the preceding inequality implies
		\[
		\phi_\tau(f_i(x+p)-t-\eta)
		\le \phi_\tau(r_i+q_i(x)^Td+e_i)
		\le \phi_\tau(r_i+q_i(x)^Td)+e_i.
		\]
		Applying Lemma~\ref{lem:scalar-major} to the last term (with
		\(u=r_i\) and \(v=q_i(x)^Td\)) gives
		\[
		\phi_\tau(f_i(x+p)-t-\eta)
		\le\phi_\tau(r_i)+a_iq_i(x)^Td+
		\frac{(q_i(x)^Td)^2}{4\omega_i}+\frac{L_i}{2}\norm p^2.
		\]
		Summing the inequalities, adding \(t+\eta\), and using \eqref{eq:Pk}
		proves \eqref{eq:global-major}.
	\end{proof}
	
	\begin{remark}[Scope of \(\bar L\)]
		\label{rem:Lbar-scope}
		For components with globally Lipschitz gradients, \eqref{eq:global-major} is
		global.  For quartic test components, a finite global \(\bar L\) does not
		exist; the same statement is valid on any bounded region covering the actual
		accepted points and tested line-search segments.  The numerical value assigned
		to \texttt{preconditionerL} therefore has this regional interpretation in
		such experiments.
	\end{remark}
	
	% ============================================================
	\section{Three Negative-Curvature BFGS Correction Strategies}
	\label{sec:three-corrections}
	% ============================================================
	
	This section formalizes the correction of a strict negative-curvature
	pair.  The construction starts from a constrained nearest-point problem:
	the corrected curvature vector is required to lie on a positive-secant
	hyperplane and is otherwise chosen to remain as close as possible to the
	observed vector in a prescribed geometry.  This yields the Euclidean and
	quasi-Newton-metric corrections as two instances of one variational
	principle.  Direct sign reflection is retained as a separate, inexpensive
	baseline rule.
	
	Let $s,y\in\R^n$, with $s\ne0$ and $s^Ty<0$, and set
	\begin{equation}
		\label{eq:negative-notation}
		\kappa:=-s^Ty=|s^Ty|>0.
	\end{equation}
	The admissible corrected vectors therefore form the affine hyperplane
	\begin{equation}
		\label{eq:target-curvature}
		\mathcal H_\kappa:=\{v\in\R^n:s^Tv=\kappa\}.
	\end{equation}
	For a symmetric positive-definite matrix $M$, write
	$\|u\|_M^2:=u^TMu$.  We first consider the metric projection of $y$
	onto $\mathcal H_\kappa$:
	\begin{equation}
		\label{eq:weighted-problem}
		\ytilde(M):=\argmin_{v\in\mathcal H_\kappa}
		\frac12\|v-y\|_M^2.
	\end{equation}
	
	\begin{proposition}[Metric projection onto the positive-secant hyperplane]
		\label{prop:weighted-nearest}
		Let $M\in\R^{n\times n}$ be symmetric positive definite.  Problem
		\eqref{eq:weighted-problem} has a unique solution, and it is given by
		\begin{equation}
			\label{eq:weighted-correction}
			\ytilde(M)=y-\frac{2s^Ty}{s^TM^{-1}s}M^{-1}s.
		\end{equation}
		In particular, $s^T\ytilde(M)=\kappa>0$.
	\end{proposition}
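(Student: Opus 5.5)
This is a strictly convex quadratic minimized over a nonempty affine hyperplane, so I will treat it with Lagrange multipliers and then check uniqueness and the curvature identity directly. Because \(s\ne0\), the set \(\mathcal H_\kappa\) is a nonempty closed affine hyperplane. The objective \(v\mapsto\frac12\|v-y\|_M^2\) is strongly convex, since its Hessian is \(M\succ0\), and it is coercive. A minimizer therefore exists, and strong convexity on a convex set makes it unique. Since the only constraint is linear, the KKT conditions are necessary and sufficient. A point \(v\) is optimal if and only if \(s^Tv=\kappa\) and there is a \(\lambda\in\R\) with
\[
M(v-y)=\lambda s .
\]

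Next I solve these conditions explicitly. The stationarity equation gives \(v=y+\lambda M^{-1}s\). Substituting into the constraint gives
\[
s^Ty+\lambda\, s^TM^{-1}s=\kappa=-s^Ty .
\]
Here \(s^TM^{-1}s>0\), because \(M^{-1}\succ0\) and \(s\ne0\). Hence \(\lambda=-2s^Ty/(s^TM^{-1}s)\), and this yields exactly \eqref{eq:weighted-correction}. Finally, multiplying \eqref{eq:weighted-correction} on the left by \(s^T\) gives \(s^T\ytilde(M)=s^Ty-2s^Ty=-s^Ty=\kappa\). This is positive by \eqref{eq:negative-notation}.

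There is no real obstacle here. The only point that needs care is justifying that the KKT point is the global minimizer and is unique. One way is to cite convexity. A self-contained alternative is to write \(v=\ytilde(M)+w\) with \(s^Tw=0\). Then the cross term is \(w^TM(\ytilde(M)-y)=\lambda\,w^Ts=0\), so
\[
\|v-y\|_M^2=\|\ytilde(M)-y\|_M^2+\|w\|_M^2 ,
\]
and the minimum is attained only at \(w=0\).
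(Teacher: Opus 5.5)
Your proof is correct and follows the paper's route: the KKT conditions for a strictly convex quadratic over the affine hyperplane $\mathcal H_\kappa$, then solving for the multiplier, then reading off $s^T\ytilde(M)=\kappa$ from feasibility. Your $\lambda$ is the negative of the paper's only because you write $M(v-y)=\lambda s$. Your decomposition $\|v-y\|_M^2=\|\ytilde(M)-y\|_M^2+\|w\|_M^2$ is a self-contained substitute for the convexity citation, and you state explicitly that $s^TM^{-1}s>0$, which the paper uses without comment.
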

	
	\begin{proof}
		The feasible set is nonempty and affine, while the objective in
		\eqref{eq:weighted-problem} is strictly convex.  Hence, the
		Karush--Kuhn--Tucker conditions are necessary and sufficient.  With
		$\lambda$ denoting the multiplier for the equality constraint, they are
		\[
		M(v-y)+\lambda s=0,
		\qquad s^Tv=\kappa.
		\]
		The first relation gives $v=y-\lambda M^{-1}s$.  Substitution into the
		constraint and use of $\kappa=-s^Ty$ yield
		\[
		\lambda=\frac{s^Ty-\kappa}{s^TM^{-1}s}
		=\frac{2s^Ty}{s^TM^{-1}s},
		\]
		which proves \eqref{eq:weighted-correction}.  The asserted secant
		identity follows from feasibility, and uniqueness follows from strict
		convexity.
	\end{proof}
	
	\subsection{Intrinsic metric choices}
	\label{subsec:weighted-nearest}
	
	The two natural geometries in the present setting are the Euclidean metric
	and the inverse quasi-Newton metric.  The following specializations make
	their respective correction directions explicit.
	
	\begin{corollary}[Euclidean and quasi-Newton-metric corrections]
		\label{cor:metric-specializations}
		Let $B\in\R^{n\times n}$ be symmetric positive definite and put
		$b:=s^TBs$.  The metric projection
		\eqref{eq:weighted-problem} gives
		\begin{align}
			\label{eq:euclidean-correction}
			\ytilde^{\Ecorr}
			&:=\ytilde(I)
			=y-\frac{2s^Ty}{s^Ts}s
			=y+\frac{2\kappa}{\|s\|^2}s,\\
			\label{eq:metric-correction}
			\ytilde^{\Hcorr}
			&:=\ytilde(B^{-1})
			=y-\frac{2s^Ty}{s^TBs}Bs
			=y+\frac{2\kappa}{b}Bs.
		\end{align}
		Thus, $\ytilde^{\Ecorr}-y\in\operatorname{span}\{s\}$ and
		$\ytilde^{\Hcorr}-y\in\operatorname{span}\{Bs\}$, and both
		vectors belong to $\mathcal H_\kappa$.
	\end{corollary}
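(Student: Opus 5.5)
The corollary follows by specializing Proposition~\ref{prop:weighted-nearest} to two choices of metric. The only preliminary point is that both choices are admissible there. $M=I$ is trivially symmetric positive definite. For $M=B^{-1}$, symmetric positive definiteness is inherited from $B$, since the inverse of an SPD matrix is SPD; moreover $(B^{-1})^{-1}=B$. Hence both minimizers exist, are unique, and are given by \eqref{eq:weighted-correction}.

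For the Euclidean case we substitute $M^{-1}=I$ into \eqref{eq:weighted-correction}. This gives $\ytilde(I)=y-\frac{2s^Ty}{s^Ts}s$. Rewriting $-s^Ty=\kappa$ from \eqref{eq:negative-notation} and $s^Ts=\|s\|^2$ yields the second form in \eqref{eq:euclidean-correction}.

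For the quasi-Newton-metric case we substitute $M^{-1}=B$. The denominator becomes $s^TBs=b$, and $b>0$ because $B\succ0$ and $s\ne0$. This gives \eqref{eq:metric-correction} after the same rewriting with $\kappa$.

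The remaining claims are read off directly:
\begin{itemize}
\item the differences $\ytilde^{\Ecorr}-y$ and $\ytilde^{\Hcorr}-y$ are scalar multiples of $s$ and $Bs$, respectively, so they lie in $\operatorname{span}\{s\}$ and $\operatorname{span}\{Bs\}$;
\item membership in $\mathcal H_\kappa$ is the feasibility assertion $s^T\ytilde(M)=\kappa$ of Proposition~\ref{prop:weighted-nearest}. One can also check it directly: $s^T\ytilde^{\Hcorr}=s^Ty+2\kappa\,\frac{s^TBs}{b}=-\kappa+2\kappa=\kappa$, and the Euclidean case is identical with $\|s\|^2$ in place of $b$.
\end{itemize}

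There is no real obstacle here. The only step requiring care is confirming that $B^{-1}$ is a legitimate metric, so that the proposition applies, and that the denominator $b$ is nonzero; both follow from $B\succ0$ and $s\ne0$.
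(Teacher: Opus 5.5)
Your proposal is correct and matches the paper's proof, which simply applies Proposition~\ref{prop:weighted-nearest} with $M=I$ and $M=B^{-1}$. Your added checks, that $B^{-1}$ is SPD with $(B^{-1})^{-1}=B$ and that $b>0$, together with the direct verification of $s^T\ytilde=\kappa$, just make explicit what the paper leaves implicit.
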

	
	\begin{proof}
		Apply Proposition~\ref{prop:weighted-nearest} with $M=I$ and
		$M=B^{-1}$, respectively.
	\end{proof}
	
	The Euclidean correction is the ordinary minimum-change correction.  Since
	$s^Ty=-\kappa$, it can equivalently be written as
	\begin{equation}
		\label{eq:householder}
		\ytilde^{\Ecorr}=\left(I-2\frac{ss^T}{s^Ts}\right)y,
	\end{equation}
	which is the reflection of $y$ across $\{v:s^Tv=0\}$.  In contrast,
	$\ytilde^{\Hcorr}$ is the minimum-change correction in the geometry
	induced by the current inverse quasi-Newton model; it displaces $y$ in the
	direction $Bs$.
	
	\subsection{Direct sign reflection}
	\label{subsec:sign}
	
	For comparison, the direct sign-reflection rule is
	\begin{equation}
		\label{eq:sign-correction}
		\ytilde^{\Ncorr}:=-y.
	\end{equation}
	It satisfies $s^T\ytilde^{\Ncorr}=\kappa>0$ at the cost of only a sign
	change.  Unlike the two intrinsic metric choices above, it reverses every
	component of $y$, including those orthogonal to $s$.
	
	\begin{remark}[Auxiliary metric interpretation]
		For each strict negative-curvature pair, direct reflection can also be
		realized as the solution of \eqref{eq:weighted-problem} for a suitably
		adapted positive-definite metric depending on $(s,y)$.  Its explicit form
		is neither needed nor used in the algorithm.  We therefore keep
		\eqref{eq:sign-correction} as a separate baseline rule and reserve the
		variational construction for the intrinsic choices $M=I$ and $M=B^{-1}$.
	\end{remark}
	
	\subsection{Compatibility with the BFGS update}
	\label{subsec:unified-corrections}
	
	Once a corrected vector has been selected, all three variants use the same
	BFGS update and the same limited-memory recursion.  The next proposition
	records the common consequence of the positive-secant construction.
	
	\begin{proposition}
		\label{prop:three-common}
		Let $B\in\R^{n\times n}$ be symmetric positive definite.  Each of
		$\ytilde^{\Ncorr}$, $\ytilde^{\Ecorr}$, and $\ytilde^{\Hcorr}$ satisfies
		\begin{equation}
			\label{eq:three-common-curvature}
			s^T\ytilde=\kappa=|s^Ty|>0.
		\end{equation}
		Consequently,
		\begin{equation}
			\label{eq:bfgs}
			B^+(\ytilde)
			=B-\frac{Bss^TB}{s^TBs}
			+\frac{\ytilde\ytilde^T}{s^T\ytilde}
		\end{equation}
		is symmetric positive definite and satisfies $B^+(\ytilde)s=\ytilde$.
		Equivalently, for $H=B^{-1}$ and $\rho=(s^T\ytilde)^{-1}$,
		\begin{equation}
			\label{eq:inverse-bfgs}
			H^+(\ytilde)
			=(I-\rho s\ytilde^T)H(I-\rho\ytilde s^T)+\rho ss^T.
		\end{equation}
	\end{proposition}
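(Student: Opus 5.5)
The argument has three parts: verify the secant identity for each correction, establish the standard BFGS positive-definiteness and secant properties, and confirm the inverse formula. Throughout, $s\neq0$, $s^Ty<0$ and $\kappa=-s^Ty$ as in \eqref{eq:negative-notation}.

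\textbf{The identity \eqref{eq:three-common-curvature}.} For $\ytilde^{\Ncorr}=-y$ it is immediate, since $s^T\ytilde^{\Ncorr}=-s^Ty=\kappa$. For $\ytilde^{\Ecorr}$ and $\ytilde^{\Hcorr}$ it follows from Proposition~\ref{prop:weighted-nearest} through Corollary~\ref{cor:metric-specializations}, because both vectors lie in $\mathcal H_\kappa$. One can also check it directly: $s^T\bigl(y+2\kappa Bs/b\bigr)=-\kappa+2\kappa=\kappa$, and the Euclidean case is the same computation with $B=I$. In all three cases $s^T\ytilde=\kappa>0$.

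\textbf{Positive definiteness and the secant equation.} Symmetry of \eqref{eq:bfgs} is clear, and $b=s^TBs>0$ because $B\succ0$ and $s\ne0$. The secant equation $B^+s=Bs-Bs\,(s^TBs)/(s^TBs)+\ytilde\,(s^T\ytilde)/(s^T\ytilde)=\ytilde$ follows by direct substitution. For positive definiteness, take $u\ne0$ and write
\[
u^TB^+u=\Bigl(u^TBu-\frac{(s^TBu)^2}{s^TBs}\Bigr)+\frac{(\ytilde^Tu)^2}{s^T\ytilde}.
\]
The first bracket is nonnegative by Cauchy--Schwarz in the $B$-inner product. It vanishes only when $u=\alpha s$ for some $\alpha\ne0$. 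In that case the second term equals $\alpha^2(s^T\ytilde)^2/(s^T\ytilde)=\alpha^2\kappa>0$, so $u^TB^+u>0$ in every case. This is the one step where the curvature condition $s^T\ytilde>0$ is used essentially, and it is the only place with real content.

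\textbf{The inverse formula \eqref{eq:inverse-bfgs}.} It suffices to verify $B^+H^+=I$ with $H=B^{-1}$ and $\rho=1/(s^T\ytilde)$. The cleanest route is Sherman--Morrison--Woodbury applied to the rank-two modification of $B$. An alternative is to multiply directly, using $Hs$-type cancellations such as $(I-\rho\ytilde s^T)s=s-\rho\ytilde\,(s^Ts)$ and $\rho\,\ytilde^Ts=1$.

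The following route is more economical. Let $H^+$ denote the right-hand side of \eqref{eq:inverse-bfgs}.
\begin{itemize}[label=--]
\item $H^+\ytilde=s$, because $(I-\rho s^{}\ytilde^T)H(\ytilde-\rho\ytilde\,s^T\ytilde)=0$ and $\rho s s^T\ytilde=s$.
\item $H^+$ is symmetric positive definite, by an argument dual to the one above.
\item On the $B$-orthogonal complement of $s$, namely $\{w:s^TBw=0\}$, one has $H^+B^+w=w$, since $s^Tw$-type terms cancel.
\end{itemize}
Since $B^+s=\ytilde$ and $H^+\ytilde=s$, the product $H^+B^+$ is the identity on $\operatorname{span}\{s\}$ and on that complement, hence on all of $\R^n$.

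The main obstacle is this algebraic verification of the inverse formula. It is standard (see \cite{NocedalWright2006}), so I would cite it rather than expand the bookkeeping; the only hypotheses it needs are $B\succ0$ and $s^T\ytilde\neq0$, both already established.
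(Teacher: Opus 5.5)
Your proposal is correct and follows the paper's proof. The curvature identity holds directly for $\ytilde^{\Ncorr}=-y$, and for the other two corrections it follows from Corollary~\ref{cor:metric-specializations} (membership in $\mathcal H_\kappa$); the BFGS conclusions are then the standard consequences of $s^T\ytilde>0$, which the paper merely cites. You instead verify those standard facts: positive definiteness via the $B$-Cauchy--Schwarz inequality, and $H^+B^+=I$ by checking it on $\operatorname{span}\{s\}$ and on $\{w:s^TBw=0\}$. These checks are sound; on the complement, what actually does the work is $s^TBw=0$ together with the cancellation of the $\rho(\ytilde^Tw)s$ terms.
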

	
	\begin{proof}
		The claim for $\ytilde^{\Ncorr}$ follows directly from
		\eqref{eq:sign-correction}; the other two follow from
		Corollary~\ref{cor:metric-specializations}.  Hence
		$s^T\ytilde>0$ in every case.  The standard BFGS positive-definiteness
		and secant properties then give the conclusions in
		\eqref{eq:bfgs}--\eqref{eq:inverse-bfgs}.
	\end{proof}
	
	We use the two stored-pair safeguards themselves to organize the
	pair-screening rule. For a nonzero secant pair $(s,y)$, let
	\begin{equation}
		\label{eq:curvature-classification}
		c:=s^Ty,\qquad
		\eta(s,y):=
		\max\left\{
		\deltac\norm{s}^2,\,
		\frac{\norm{y}^2}{\Lc}
		\right\}.
	\end{equation}
	Every stored pair is required to satisfy
	\begin{equation}
		\label{eq:pair-safeguard}
		\frac{s^T\ytilde}{\norm{s}^2}>\deltac,
		\qquad
		\frac{\norm{\ytilde}^2}{s^T\ytilde}<\Lc,
		\qquad 0<\deltac<\Lc.
	\end{equation}
	If $c>\eta(s,y)$, the original positive-curvature pair already satisfies
	both safeguards and is stored without correction. For the $\Ncorr$ and
	$\Ecorr$ corrections, $s^T\ytilde=|c|$ and
	$\norm{\ytilde}=\norm{y}$; hence $c<-\eta (s,y)$ guarantees both
	safeguards before the correction is formed. The $\Hcorr$ correction is
	handled in two stages. Its first-stage test uses only
	$c<-\deltac\norm{s}^2$, which guarantees the first safeguard after
	correction because $s^T\ytilde^{\Hcorr}=|c|$. Only then is
	$\mathcal B_{\mathcal M}s$ evaluated and the corrected vector formed; the
	second safeguard is checked on $\ytilde^{\Hcorr}$ itself. Thus the
	$H$-metric branch does not reject a pair merely because the uncorrected
	Euclidean norm $\norm{y}$ fails the second safeguard.
	
	% ============================================================
	% The implementation-consistent preconditioned development replaces the
	% superseded scalar-initial-metric development above.
	\section{Preconditioned Limited-Memory BFGS Implementation}
	\label{sec:lbfgs}
	% ============================================================
	Let \(m_L\ge1\) be the memory capacity and let
	\[
	\mathcal M=\{(s_j,\widetilde y_j,\rho_j)\}_{j=1}^q,
	\qquad q\le m_L,\qquad
	\rho_j=(s_j^T\widetilde y_j)^{-1}.
	\]
	All stored pairs obey \(s_j^T\widetilde y_j>0\) and the screening safeguards
	\eqref{eq:pair-safeguard}.  At the current point \(z\), the implementation
	uses the current matrix \(P=P(z,\tau)\) in the initial inverse metric:
	\begin{equation}
		\label{eq:preconditioned-initial-scaling}
		H^{(0)}=\chi P^{-1},\qquad
		\chi=\begin{cases}
			\chi_0, & q=0,\\[1mm]
			\displaystyle\clip_{[\chi_{\min},\chi_{\max}]}
			\left(\frac{s_q^T\widetilde y_q}
			{\widetilde y_q^TP^{-1}\widetilde y_q}\right), & q>0.
		\end{cases}
	\end{equation}
	Thus the scale in the code is the BB2 scale in the current \(P^{-1}\)-metric,
	not the Euclidean expression in the unpreconditioned method.  For the H-type
	negative-curvature correction, the direct compact recurrence is initialized by
	\begin{equation}
		\label{eq:preconditioned-direct-initial}
		B^{(0)}=(H^{(0)})^{-1}=P/\chi.
	\end{equation}
	The two formulas are exact inverses.  Turning the preconditioner off sets
	\(P=I\) and recovers the original implementation.
	
	The preconditioned metric in \eqref{eq:preconditioned-initial-scaling} must be
	translated into an efficient limited-memory direction computation.  For this
	purpose, Algorithm~\ref{alg:P-twoloop} records the preconditioned two-loop
	recursion used at every inner HMLBFGS iteration.  It is not a separate
	optimization method, but the computational subroutine that maps the current
	gradient to a search direction; compared with the standard L-BFGS recursion,
	the essential change is the application of \(\chi P^{-1}\) in the middle
	scaling step.
	
	\begin{algorithm}[htbp]
		\caption{Preconditioned L-BFGS two-loop recursion}
		\label{alg:P-twoloop}
		\begin{algorithmic}[1]
			\Require Gradient \(g\), current \(P\succ0\), and ordered memory
			\(\mathcal M=\{(s_j,\widetilde y_j,\rho_j)\}_{j=1}^{q}\)
			\Ensure \(d=-H_{\mathcal M,P}g\), without changing \(\mathcal M\)
			\State Set \(v\gets g\)
			\For{\(j=q,q-1,\ldots,1\)}
			\State \(a_j\gets\rho_j s_j^Tv\); \(v\gets v-a_j\widetilde y_j\)
			\EndFor
			\State Set \(\chi\) by \eqref{eq:preconditioned-initial-scaling} and
			\(r\gets\chi P^{-1}v\)
			\For{\(j=1,2,\ldots,q\)}
			\State \(b_j\gets\rho_j\widetilde y_j^Tr\); \(r\gets r+s_j(a_j-b_j)\)
			\EndFor
			\State \Return \(d=-r\)
		\end{algorithmic}
	\end{algorithm}
	
	For the H correction, \(B_{\mathcal M,P}v\) is evaluated without forming a
	dense matrix.  Starting from \(B^{(0)}v=Pv/\chi\), propagate the stored
	vectors and \(v\) through
	\begin{equation}
		\label{eq:preconditioned-direct-product}
		B^{(j)}=B^{(j-1)}-
		\frac{B^{(j-1)}s_js_j^TB^{(j-1)}}{s_j^TB^{(j-1)}s_j}
		+\frac{\widetilde y_j\widetilde y_j^T}{s_j^T\widetilde y_j},
		\qquad B_{\mathcal M,P}v:=B^{(q)}v.
	\end{equation}
	This is exactly the code path used to form \(B_{\mathcal M,P}s\) in the
	H-type correction.  It does not change the memory.
	
	\begin{lemma}[Positive definiteness and fixed-layer metric bounds]
		\label{lem:lbfgs-uniform}
		Fix \(\tau>0\) and a compact set \(\mathcal K\) containing the inner
		iterates.  Suppose the component gradients are continuous on the projection of
		\(\mathcal K\), every stored pair obeys \eqref{eq:pair-safeguard}, and
		\(q\le m_L\).  Then there are constants
		\(0<\underline h_\tau\le\overline h_\tau<\infty\), independent of the
		inner iteration, such that
		\begin{equation}
			\label{eq:preconditioned-metric-bounds}
			\underline h_\tau I\preceq H_{\mathcal M,P}\preceq\overline h_\tau I.
		\end{equation}
		In particular, the L-BFGS direction is a strict descent direction whenever
		\(g\ne0\).
	\end{lemma}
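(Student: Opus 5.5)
Since $H_{\mathcal M,P}$ is produced by at most $q\le m_L$ inverse BFGS updates \eqref{eq:inverse-bfgs} applied to $H^{(0)}=\chi P^{-1}$, the plan is to bound the initial matrix uniformly and then bound each update in terms of the pair safeguards \eqref{eq:pair-safeguard}.

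\emph{Step 1: bounds on $P$.} By Lemma~\ref{prop:preconditioner-spd}, $P(z,\tau)\succeq\delta I$. For an upper bound, note that $\omega_i\ge\tau$, so $D^{\rm hyp}\preceq\tau^{-1}I$. Hence
\[
P\preceq\Bigl(\delta+\bar L+\tfrac1{2\tau}\|J(z)\|^2\Bigr)I .
\]
The quantity $\|J(z)\|^2\le\sum_i(\|\grad f_i(x)\|^2+1)$ is bounded on $\mathcal K$ because the component gradients are continuous on the compact projection of $\mathcal K$. This gives $p_{\min}I\preceq P\preceq p_{\max}I$ with constants depending only on $\tau$ and $\mathcal K$.

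\emph{Step 2: bounds on $\chi$.} When $q=0$ we have $\chi=\chi_0$; otherwise $\chi\in[\chi_{\min},\chi_{\max}]$ by the clipping. Thus $H^{(0)}$ has eigenvalues in $[\min(\chi_0,\chi_{\min})/p_{\max},\,\max(\chi_0,\chi_{\max})/p_{\min}]$.

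\emph{Step 3: propagate through the updates.} I would bound the direct and inverse matrices separately.

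For the direct matrix, the update \eqref{eq:bfgs} gives $\operatorname{tr}B^{+}\le\operatorname{tr}B+\|\ytilde\|^2/(s^T\ytilde)<\operatorname{tr}B+\Lc$, using the second safeguard. After at most $m_L$ steps, $\lambda_{\max}(B_{\mathcal M,P})\le\operatorname{tr}B^{(0)}+m_L\Lc$. This yields $\underline h_\tau$.

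For the inverse matrix, use \eqref{eq:inverse-bfgs} with $\rho=1/(s^T\ytilde)$. We have $\|I-\rho s\ytilde^T\|\le1+\rho\|s\|\|\ytilde\|$. By Cauchy--Schwarz and both safeguards,
\[
\rho\|s\|\|\ytilde\|=\sqrt{\frac{\|s\|^2}{s^T\ytilde}}\sqrt{\frac{\|\ytilde\|^2}{s^T\ytilde}}<\sqrt{\Lc/\deltac}.
\]
The added term satisfies $\rho\|s\|^2<1/\deltac$. Therefore
\[
\|H^{+}\|\le\bigl(1+\sqrt{\Lc/\deltac}\bigr)^2\|H\|+1/\deltac .
\]
Iterating this at most $m_L$ times gives $\overline h_\tau$.

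Alternatively, one could use the determinant formula $\det B^+=\det B\cdot s^T\ytilde/s^TBs$ together with the trace bound. The norm recursion is simpler, so I would use that. Positive definiteness at each stage follows from Proposition~\ref{prop:three-common}, since $s_j^T\ytilde_j>0$.

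\emph{Step 4: descent.} With the bounds in hand, $g^Td=-g^TH_{\mathcal M,P}g\le-\underline h_\tau\|g\|^2<0$ whenever $g\ne0$.

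The main subtlety I expect is checking that the constants do not depend on the inner iteration. Here $P$ changes with $z$ and the memory changes from step to step, but every bound above uses only $\tau$, $\mathcal K$, $\delta$, $\bar L$, $\chi_0$, $\chi_{\min}$, $\chi_{\max}$, $\deltac$, $\Lc$ and $m_L$. None of these depends on the particular iterate, so the constants are uniform.
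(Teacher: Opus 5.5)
Your proof is correct and follows the paper's argument. You bound $H^{(0)}=\chi P^{-1}$ using $P\succeq\delta I$ together with a uniform upper bound on $P$ over $\mathcal K$. You then get the lower bound from the additive growth of the direct matrix by at most $\Lc$ per stored pair, and the upper bound from the inverse-update norm recursion iterated at most $m_L$ times. The differences are only cosmetic: you use the trace instead of $\lambda_{\max}$ (a harmless factor $n+1$), an explicit bound on $P$ via $\omega_i\ge\tau$ instead of continuity and compactness, and the sharper factor $(1+\sqrt{\Lc/\deltac})^2$ in place of the paper's $(1+\Lc/\deltac)^2$.
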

	
	\begin{proof}
		Continuity and compactness give
		\(\overline p_\tau:=\max_{z\in\mathcal K}\lambda_{\max}(P(z,\tau))<\infty\),
		while Proposition~\ref{prop:preconditioner-spd} gives \(P\succeq\delta I\).
		Set \(\chi_-:=\min\{\chi_0,\chi_{\min}\}\) and
		\(\chi_+:=\max\{\chi_0,\chi_{\max}\}\).  From
		\eqref{eq:preconditioned-direct-initial},
		\[
		\lambda_{\max}(B^{(0)})\le\overline p_\tau/\chi_-.
		\]
		The direct BFGS update and \eqref{eq:pair-safeguard} imply
		\[
		\lambda_{\max}(B^{(j+1)})
		\le\lambda_{\max}(B^{(j)})+
		\frac{\norm{\widetilde y_j}^2}{s_j^T\widetilde y_j}
		\le\lambda_{\max}(B^{(j)})+\Lc.
		\]
		Consequently one can take
		\[
		\underline h_\tau=
		\left(\overline p_\tau/\chi_-+m_L\Lc\right)^{-1}.
		\]
		For the upper bound, \(\norm{H^{(0)}}\le\chi_+/\delta\).  The inverse BFGS
		formula and the two screening inequalities give
		\[
		\norm{H^{(j+1)}}
		\le\left(1+\frac{\Lc}{\deltac}\right)^2\norm{H^{(j)}}+
		\frac1{\deltac}.
		\]
		Iteration of this recurrence defines a finite \(\overline h_\tau\).  The
		positive definiteness follows from \(H^{(0)}\succ0\) and the positive
		curvature of every stored pair.  Hence
		\(g^T(-H_{\mathcal M,P}g)<0\) for \(g\ne0\).
	\end{proof}
	
	\begin{remark}[Why the bounds are fixed-layer bounds]
		\label{rem:no-uniform-tau}
		The constant \(\overline p_\tau\) may grow as \(\tau\downarrow0\), because
		\(D^{\rm hyp}\) has entries of order \(\tau^{-1}\) near a switching surface.
		The code-level pair safeguards are Euclidean, and the two-loop direction is
		only required to pass the descent check.  Therefore the present implementation
		justifies uniform bounds for each fixed smoothing layer, but not a
		\(\tau\)-independent L-BFGS energy inequality.  In particular, one must not
		claim a \(\tau\)-uniform Armijo lower bound from \eqref{eq:global-major}
		without an additional \(P\)-metric pair safeguard or an explicit energy test.
	\end{remark}
	
	\begin{remark}[Large-scale application of \(P^{-1}\)]
		Write \(A=\delta I+\bar L\Pi_x\) and
		\(U=2^{-1/2}(D^{\rm hyp})^{1/2}J\).  Then \(P=A+U^TU\), and
		\begin{equation}
			\label{eq:woodbury}
			P^{-1}=A^{-1}-A^{-1}U^T(I+UA^{-1}U^T)^{-1}UA^{-1}.
		\end{equation}
		This is the Woodbury realization used by the code when the number of
		components is small; its middle system has order \(m\).  For sparse,
		large-component structures, the identical matrix is applied by a sparse direct
		solve.  These are exact linear-algebra realizations of the same \(P\).
	\end{remark}
	
	% ============================================================
	\section{Preconditioned HMLBFGS Algorithms}
	\label{sec:algorithm}
	% ============================================================
	Let \(0<\tau_{\rm fac}<1\), \(\tau_0>0\), and set
	\begin{equation}
		\label{eq:geometric-tau}
		\tau_k=\tau_0\tau_{\rm fac}^{k},\qquad k=0,1,\ldots .
	\end{equation}
	This geometric continuation is the one implemented in
	\texttt{hmlbfgs-pee-royset.m}; it replaces the former \(\tau_k=1/p_k\)
	description.  The Armijo condition is
	\begin{equation}
		\label{eq:full-armijo}
		\Phi_{\tau_k}(z+\lambda d)\le\Phi_{\tau_k}(z)+
		\alpha\lambda\grad\Phi_{\tau_k}(z)^Td,
		\qquad 0<\alpha<1.
	\end{equation}
	
	Before giving the complete HMLBFGS scheme, we isolate the treatment of a new
	secant pair.  This separation is useful because the three proposed variants
	share the same continuation, preconditioning, line search, and L-BFGS
	recursion, and differ only in the treatment of strict negative curvature.
	Algorithm~\ref{alg:correction} therefore collects the screening, correction,
	and storage decisions in one subroutine: it keeps an admissible positive pair,
	modifies a strict negative-curvature pair according to
	\(\Cmode\in\{\Ncorr,\Ecorr,\Hcorr\}\), or discards the pair if the
	prescribed safeguards are not satisfied.
	
	\begin{algorithm}[htbp]
		\caption{Secant-pair screening and correction used by HMLBFGS}
		\label{alg:correction}
		\begin{algorithmic}[1]
			\Require \(s\ne0\), \(y\), current \(P\), memory \(\mathcal M\), and
			mode \(\Cmode\in\{\Ncorr,\Ecorr,\Hcorr\}\)
			\Ensure either \((\widetilde y,\texttt{store})\) or \((\varnothing,\texttt{skip})\)
			\State Set \(c\gets s^Ty\), \(\eta\gets\max\{\deltac\norm{s}^2,\norm y^2/\Lc\}\)
			\If{the optional numerical curvature tolerance classifies \(c\) as zero}
			\State \Return \((\varnothing,\texttt{skip})\)
			\EndIf
			\If{\(c>\eta\)}
			\State \Return \((y,\texttt{store})\)
			\EndIf
			\If{\(\Cmode=\Ncorr\) and \(c<-\eta\)}
			\State \Return \((-y,\texttt{store})\)
			\ElsIf{\(\Cmode=\Ecorr\) and \(c<-\eta\)}
			\State \Return \((y-2c\,s/(s^Ts),\texttt{store})\)
			\ElsIf{\(\Cmode=\Hcorr\) and \(c<-\deltac\norm{s}^2\)}
			\State \(b\gets B_{\mathcal M,P}s\), \(b_s\gets s^Tb\)
			\If{\(b_s>0\)}
			\State \(\widetilde y\gets y-2c\,b/b_s\)
			\If{\(\norm{\widetilde y}^2<\Lc|c|\)}
			\State \Return \((\widetilde y,\texttt{store})\)
			\EndIf
			\EndIf
			\EndIf
			\State \Return \((\varnothing,\texttt{skip})\)
		\end{algorithmic}
	\end{algorithm}
	\FloatBarrier
	
	With the two computational building blocks specified, the full method can now
	be presented.  Algorithm~\ref{alg:main} combines geometric smoothing
	continuation, construction of the hyperbolic-majorization preconditioner, the
	preconditioned two-loop recursion in Algorithm~\ref{alg:P-twoloop}, Armijo
	backtracking, and the pair-processing step in Algorithm~\ref{alg:correction}.
	It serves as the common template for all three methods; choosing
	\(\Cmode=\Ncorr\), \(\Ecorr\), or \(\Hcorr\) produces the corresponding
	HMLBFGS variant while leaving the remaining steps unchanged.
	
	\begin{algorithm}[htbp]
		\caption{Preconditioned HMLBFGS with geometric smoothing continuation}
		\label{alg:main}
		\begin{algorithmic}[1]
			\Require \(x_0\), \(\tau_0\), \(\tau_{\rm fac}\), \(m_L\), \(\Cmode\),
			\(\delta,\bar L\), \(\alpha,\beta\in(0,1)\), scale and screening
			parameters, \texttt{outerMax}, \texttt{innerMax}, and tolerance \texttt{tol}
			\State \(z\gets(x_0,f(x_0))\)
			\For{\(k=0,1,\ldots,\texttt{outerMax}-1\)}
			\State \(\tau\gets\tau_0\tau_{\rm fac}^{k}\), \(\mathcal M\gets\varnothing\)
			\For{\(\ell=0,1,\ldots,\texttt{innerMax}-1\)}
			\State Evaluate \(\Phi_\tau(z)\), \(g=\grad\Phi_\tau(z)\), component gradients,
			and residuals
			\If{\(\norm g_\infty\le\texttt{tol}\)} \State declare this layer converged; \textbf{break} \EndIf
			\State Build \(P=P(z,\tau)\) by \eqref{eq:Pk} and obtain \(d\) from
			Algorithm~\ref{alg:P-twoloop}
			\If{\(d\) is nonfinite or \(g^Td\ge0\)}
			\State \(\mathcal M\gets\varnothing\), \(d\gets-g\)
			\EndIf
			\State Starting from \texttt{initialStep}, backtrack by \(\beta\) until
			\eqref{eq:full-armijo} holds; on a permitted restart, clear \(\mathcal M\)
			and retry with \(d=-g\)
			\If{no accepted step is found within the prescribed safeguard caps} \State stop with line-search failure \EndIf
			\State \(z^+\gets z+\lambda d\), \(g^+\gets\grad\Phi_\tau(z^+)\),
			\(s\gets z^+-z\), \(y\gets g^+-g\)
			\State Apply Algorithm~\ref{alg:correction} with the \emph{current} \(P\)
			and current memory; if it returns \texttt{store}, append
			\((s,\widetilde y,(s^T\widetilde y)^{-1})\), deleting the oldest pair if
			necessary
			\State \(z\gets z^+\)
			\EndFor
			\If{the layer hits \texttt{innerMax} and \texttt{innerMaxAction=stop}} \State stop \EndIf
			\EndFor
			\State \Return \(z\)
		\end{algorithmic}
	\end{algorithm}
	
	\FloatBarrier
	The modular form above also clarifies the convergence analysis: the
	preconditioned recursion provides the descent metric, while the pair-processing
	rule enforces the curvature safeguards needed by the limited-memory model.
	We now use these properties to analyze the complete scheme.
	
	% ============================================================
	\section{Convergence Properties}
	\label{sec:global}
	% ============================================================
	The statements in this section distinguish the actual finite-parameter code
	from its idealized continuation limit.  The code tests
	\(\norm g_\infty\le\texttt{tol}\) at each layer; it does not use a
	vanishing tolerance by default.
	
	\begin{assumption}[Compact level set]
		\label{ass:level-bounded}
		For the initial layer \(\tau_0\), the sublevel set
		\(\mathcal K_0=\{z:\Phi_{\tau_0}(z)\le\Phi_{\tau_0}(z_0)\}\) is compact.
	\end{assumption}
	
	\begin{lemma}[Fixed-layer descent and Armijo acceptance]
		\label{lem:armijo-reset}
		Fix \(\tau>0\), and suppose a layer remains in a compact subset of
		\(\mathcal K_0\).  Under the hypotheses of Lemma~\ref{lem:lbfgs-uniform},
		there are \(h_\tau,H_\tau>0\) such that each accepted nonrestart direction
		satisfies
		\[
		g^Td\le-h_\tau\norm g^2,\qquad \norm d\le H_\tau\norm g.
		\]
		The same inequalities hold after replacing \(h_\tau\) by
		\(\min\{h_\tau,1\}\) and \(H_\tau\) by \(\max\{H_\tau,1\}\) for the
		restart direction \(-g\).  Consequently, ordinary Armijo backtracking
		terminates and accepts a step bounded away from zero within this fixed layer.
	\end{lemma}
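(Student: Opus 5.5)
The plan is to read both inequalities directly off the two-sided spectral bounds of Lemma~\ref{lem:lbfgs-uniform}, and then obtain Armijo termination from a local Lipschitz bound on \(\grad\Phi_\tau\) over a compact neighbourhood of the layer.

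First, the directional bounds. For a nonrestart direction \(d=-H_{\mathcal M,P}g\), the estimate \eqref{eq:preconditioned-metric-bounds} gives
\[
g^Td=-g^TH_{\mathcal M,P}g\le-\underline h_\tau\norm g^2,
\qquad
\norm d\le\norm{H_{\mathcal M,P}}\norm g\le\overline h_\tau\norm g .
\]
So I take \(h_\tau=\underline h_\tau\) and \(H_\tau=\overline h_\tau\). These constants are independent of the inner iteration, because \(\overline p_\tau\) is a maximum over the compact set and the screening constants \(\deltac,\Lc,m_L\) and \(\chi_\pm\) are fixed. For the restart direction \(d=-g\) we have \(g^Td=-\norm g^2\) and \(\norm d=\norm g\). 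Hence the pair \(\min\{h_\tau,1\}\), \(\max\{H_\tau,1\}\) covers both cases.

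Second, Armijo termination.
1. Let \(\mathcal C\) be the compact subset of \(\mathcal K_0\) containing the layer, and let \(\mathcal C_r\) be its closed \(r\)-neighbourhood for some \(r>0\).
2. By Proposition~\ref{prop:gradient-local-lipschitz}, \(\grad\Phi_\tau\) is locally Lipschitz. On the compact set \(\mathcal C_r\) it is therefore Lipschitz with some constant \(\Lambda_\tau\). Alternatively, Proposition~\ref{prop:hessian-bound} gives \(\Lambda_\tau=L_{\mathcal C_r,\tau}\).
3. Set \(G:=\max_{\mathcal C}\norm{\grad\Phi_\tau}\). For \(\lambda\le\bar\lambda_0:=r/(\max\{H_\tau,1\}G+1)\), the whole segment \(z+[0,\lambda]d\) lies in \(\mathcal C_r\), since \(\norm d\le\max\{H_\tau,1\}G\).
4. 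On that segment the descent lemma gives
\[
\Phi_\tau(z+\lambda d)\le\Phi_\tau(z)+\lambda g^Td+\tfrac{\Lambda_\tau}{2}\lambda^2\norm d^2 .
\]
5. Using the two directional bounds, \eqref{eq:full-armijo} holds whenever
\[
\lambda\le\min\Bigl\{\bar\lambda_0,\ \frac{2(1-\alpha)\min\{h_\tau,1\}}{\Lambda_\tau\max\{H_\tau,1\}^2}\Bigr\}=:\bar\lambda .
\]
6. Backtracking from \texttt{initialStep} by the factor \(\beta\) therefore stops after finitely many reductions. The accepted step satisfies \(\lambda\ge\min\{\texttt{initialStep},\beta\bar\lambda\}>0\), uniformly within the layer.

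The main obstacle is the set bookkeeping. The metric bound \(\overline p_\tau\) and the Lipschitz constant must be valid at every point where they are used, including Armijo trial points that may leave \(\mathcal K_0\). I handle this with the enlarged compact set \(\mathcal C_r\) and the step cap \(\bar\lambda_0\). I also need Lemma~\ref{lem:lbfgs-uniform} to apply with \(\mathcal K=\mathcal C\). This holds because the directions are only evaluated at iterates, and the Armijo decrease keeps every iterate in \(\mathcal K_0\).
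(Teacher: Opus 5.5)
Your proposal is correct and follows the paper's proof. You read the two directional inequalities off the spectral bounds \eqref{eq:preconditioned-metric-bounds}, and you obtain Armijo termination from a Lipschitz constant for \(\grad\Phi_\tau\) on a compact neighbourhood plus the descent lemma, which gives the paper's threshold \(2(1-\alpha)h_\tau/(L_\tau H_\tau^2)\); your step cap \(\bar\lambda_0\), which keeps trial segments inside \(\mathcal C_r\), is bookkeeping that the paper's \(\min\{1,\cdot\}\) bound glosses over. One small slip, which you do not need: for layers after the first the Armijo decrease is in \(\Phi_{\tau_k}\), not \(\Phi_{\tau_0}\), so it does not by itself keep iterates in \(\mathcal K_0\), but the lemma's hypothesis already places the layer in a compact \(\mathcal C\subset\mathcal K_0\), so \(\mathcal K=\mathcal C\) is admissible directly.
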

	
	\begin{proof}
		The first two inequalities follow from
		\eqref{eq:preconditioned-metric-bounds}.  On a compact neighborhood of the
		layer, \(\grad\Phi_\tau\) has a finite Lipschitz constant \(L_\tau\).
		The usual descent lemma shows that Armijo holds for every
		\[
		0<\lambda\le
		\min\left\{1,\frac{2(1-\alpha)h_\tau}{L_\tau H_\tau^2}\right\}.
		\]
		Geometric backtracking therefore terminates with a positive fixed-layer lower
		bound.  Notice that this proof uses the fixed-layer metric bounds, not an
		unavailable \(\tau\)-uniform energy estimate.
	\end{proof}
	
	\begin{theorem}[Fixed-layer first-order convergence]
		\label{thm:fixed-tau}
		Suppose the component regularity required in Lemma~\ref{lem:lbfgs-uniform}
		holds, all iterates and trial segments of the fixed layer remain in a compact
		set, and Armijo backtracking is uncapped. Then the inner iteration for a
		fixed \(\tau>0\) cannot generate infinitely many iterates
		with \(\norm{\grad\Phi_\tau(z)}\ge\varepsilon\) for any \(\varepsilon>0\).
		Thus \(\norm{\grad\Phi_\tau(z^\ell)}\to0\) if the inner loop is continued
		indefinitely.  With a positive stopping tolerance it reaches an
		\(\varepsilon\)-stationary point in finitely many iterations.
	\end{theorem}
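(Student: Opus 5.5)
The argument is the standard sufficient-decrease-plus-summability scheme, applied inside one fixed layer and built on Lemma~\ref{lem:armijo-reset}. Fix \(\tau>0\) and let \(\mathcal K\) be the compact set that contains all inner iterates and trial segments of the layer.

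\emph{Step 1: uniform constants.} Lemma~\ref{lem:lbfgs-uniform} and Lemma~\ref{lem:armijo-reset} provide constants \(h:=\min\{h_\tau,1\}>0\) and \(H:=\max\{H_\tau,1\}\). With these constants, every direction actually used satisfies
\[
g_\ell^Td_\ell\le-h\norm{g_\ell}^2,\qquad \norm{d_\ell}\le H\norm{g_\ell}.
\]
This covers both the L-BFGS direction and the restart direction \(-g_\ell\). Since \(\grad\Phi_\tau\) is \(L_\tau\)-Lipschitz on \(\mathcal K\) (Proposition~\ref{prop:gradient-local-lipschitz} plus compactness), the descent lemma shows that the Armijo condition \eqref{eq:full-armijo} holds for every
\[
\lambda\le\bar\lambda:=\min\Bigl\{1,\tfrac{2(1-\alpha)h}{L_\tau H^2}\Bigr\}.
\]
Backtracking from \texttt{initialStep} by the factor \(\beta\) therefore accepts a step \(\lambda_\ell\ge\underline\lambda:=\min\{\texttt{initialStep},\beta\bar\lambda\}>0\). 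This uses the assumption that backtracking is uncapped, so the line-search-failure branch never fires.

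\emph{Step 2: per-iteration decrease and summation.} Combining the Armijo inequality with the descent bound gives
\[
\Phi_\tau(z^{\ell+1})\le\Phi_\tau(z^\ell)-\alpha\underline\lambda h\norm{g_\ell}^2 .
\]
So \(\{\Phi_\tau(z^\ell)\}\) is nonincreasing. It is also bounded below, because \(\Phi_\tau\) is continuous on the compact \(\mathcal K\). Alternatively, \eqref{eq:smoothing-gap} gives \(\Phi_\tau\ge F\), and \(F\) is bounded below on bounded sets. Telescoping yields
\[
\sum_\ell\norm{g_\ell}^2\le\frac{\Phi_\tau(z^0)-\inf_{\mathcal K}\Phi_\tau}{\alpha\underline\lambda h}<\infty .
\]

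\emph{Step 3: conclusions.} If infinitely many iterates had \(\norm{g_\ell}\ge\varepsilon\), the series would contain infinitely many terms \(\ge\varepsilon^2\) and would diverge, a contradiction. Hence \(\norm{\grad\Phi_\tau(z^\ell)}\to0\) whenever the loop continues indefinitely. With a positive tolerance, the number of iterations with \(\norm{g}_\infty>\texttt{tol}\) is finite, since \(\norm{g}\ge\norm{g}_\infty\). More explicitly, this number is at most \((\Phi_\tau(z^0)-\inf\Phi_\tau)/(\alpha\underline\lambda h\,\texttt{tol}^2)\). So the stopping test is met after finitely many iterations.

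\emph{Main obstacle.} The delicate point is the uniformity in Step 1. The metric bounds of Lemma~\ref{lem:lbfgs-uniform} must hold for every inner iteration, even though the memory is cleared at restarts and \(P\) is recomputed at each iterate. I would handle this by noting that the bounds in that lemma depend only on \(\overline p_\tau\) over \(\mathcal K\), \(\delta\), the \(\chi\)-clipping constants, \(\deltac\), \(\Lc\) and \(m_L\). None of these depend on \(\ell\), and an empty memory is simply the case \(q=0\). The Lipschitz constant \(L_\tau\) is legitimate because the trial segments are assumed to stay in \(\mathcal K\).
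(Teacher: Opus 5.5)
Your proposal is correct and follows the paper's proof. Both use Lemma~\ref{lem:armijo-reset} to obtain a uniform Armijo step, hence a fixed decrease proportional to \(\norm{g}^2\), and this contradicts the lower boundedness of \(\Phi_\tau\) on the compact set if infinitely many iterates have \(\norm{g}\ge\varepsilon\). Your telescoping bound makes the paper's one-line contradiction explicit, and the observation \(\norm{g}\ge\norm{g}_\infty\) handles the code's infinity-norm stopping test.
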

	
	\begin{proof}
		Lemma~\ref{lem:armijo-reset} gives a fixed positive decrease whenever
		\(\norm g\ge\varepsilon\).  This contradicts the lower boundedness of
		\(\Phi_\tau\) on the compact level set if infinitely many such iterates occur.
	\end{proof}
	
	\begin{theorem}[Clarke stationarity for the ideal continuation]
		\label{thm:global}
		Suppose Assumption~\ref{ass:level-bounded} holds, every \(f_i\) is
		continuously differentiable, and the idealized continuation uses
		\eqref{eq:geometric-tau}.  Let \(z_k\) be a terminal point of layer \(k\)
		with \(\norm{\grad\Phi_{\tau_k}(z_k)}\le\varepsilon_k\), where
		\(\varepsilon_k\downarrow0\).  If the terminal points have an accumulation
		point \((x^*,t^*)\), then
		\[0\in\clarke f(x^*).\]
	\end{theorem}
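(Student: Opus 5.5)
The plan is to reduce the statement to Corollary~\ref{cor:stationarity-lift}; everything else is bookkeeping. Let \((x^*,t^*)\) be an accumulation point of the terminal points \(z_k=(x_k,t_k)\). I will choose a subsequence \(k_j\to\infty\) with \(z_{k_j}\to(x^*,t^*)\). Along this subsequence two facts hold. First, by \eqref{eq:geometric-tau}, \(\tau_{k_j}=\tau_0\tau_{\rm fac}^{k_j}\downarrow0\), because \(0<\tau_{\rm fac}<1\). Second, by hypothesis \(\norm{\grad\Phi_{\tau_{k_j}}(z_{k_j})}\le\varepsilon_{k_j}\), and since \(\varepsilon_k\downarrow0\) this gives \(\grad\Phi_{\tau_{k_j}}(z_{k_j})\to0_{n+1}\).

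These are exactly the hypotheses of Corollary~\ref{cor:stationarity-lift}, with \((x_j,t_j):=z_{k_j}\), where each \(f_i\) is continuously differentiable. The corollary gives \(0_{n+1}\in\partial F(x^*,t^*)\). It then uses Proposition~\ref{prop:f-F-stationarity}(ii) to conclude \(0_n\in\partial f(x^*)\), which is the claim.

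Assumption~\ref{ass:level-bounded} is used only to guarantee that an accumulation point exists, by keeping the terminal points bounded. The statement already assumes an accumulation point, so the argument can proceed without relying on it. If one wants to verify existence anyway, the approach is as follows. Use \eqref{eq:smoothing-gap} to compare layers: \(\Phi_{\tau_{k+1}}\le F+m\tau_{k+1}/2\le\Phi_{\tau_k}+m\tau_{k+1}/2\). Combined with monotone Armijo decrease inside each layer, this bounds \(\Phi_{\tau_0}(z_k)\) by \(\Phi_{\tau_0}(z_0)+m\sum_k\tau_k/2\). That bound only places the terminal points in a slightly enlarged sublevel set, so boundedness would need a further argument, which is why I treat existence as given.

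The main obstacle is the delicate part of the corollary itself: the limits of the coefficients \(a_{i,j}=\tfrac12(1+\beta_{i,\tau_j})\) when \(f_i(x^*)=t^*\). There, \(r_{i,j}/\tau_j\) can tend to any value, so \(a_i\) can be anything in \([0,1]\). For \(f_i(x^*)\ne t^*\), the residual \(r_{i,j}\) stays bounded away from zero while \(\tau_j\to0\), which forces \(a_i\in\{0,1\}\). In every case \(a_i\) is an admissible coefficient for the Clarke subdifferential of \(\max\{0,f_i(x)-t\}\). The remaining step is to pass to the limit in \(\grad\Phi_{\tau_j}(z_j)=(0_n,1)+\sum_i a_{i,j}q_i(x_j)\), after extracting a further subsequence so that all \(a_{i,j}\) converge, using continuity of \(q_i\). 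Because the corollary is stated earlier, I will simply invoke it and not repeat this analysis.
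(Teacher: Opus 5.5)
Your proposal is correct and matches the paper's proof: you extract a subsequence $z_{k_j}\to(x^*,t^*)$, note that $\tau_{k_j}\downarrow0$ by \eqref{eq:geometric-tau} and that $\grad\Phi_{\tau_{k_j}}(z_{k_j})\to0_{n+1}$ because $\varepsilon_k\downarrow0$, and then invoke Corollary~\ref{cor:stationarity-lift}. Your observation that Assumption~\ref{ass:level-bounded} plays no role once an accumulation point is assumed is also consistent with the paper, whose argument never uses it.
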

	
	\begin{proof}
		Because \(0<\tau_{\rm fac}<1\), \(\tau_k\downarrow0\).  At a terminal
		point, \(\norm{\grad\Phi_{\tau_k}(z_k)}\le\varepsilon_k\to0\).  The
		stationarity lifting corollary \ref{cor:stationarity-lift} applies to every
		convergent subsequence and gives the result.
	\end{proof}
	
	\begin{remark}[What the current code certifies]
		With fixed \texttt{tol} and finite \texttt{outerMax}, the implementation
		returns a numerical \texttt{tol}-stationary point of the final smoothed layer
		(provided that layer reaches its stopping test).  This is not, by itself, an
		exact certificate of Clarke stationarity for the original nonsmooth minimax
		problem.  Theorem~\ref{thm:global} requires both an unbounded geometric
		continuation and \(\varepsilon_k\to0\); it is the appropriate asymptotic
		conclusion.
	\end{remark}
	
	% ============================================================
	\section{Fixed-Smoothing Linear Rates and Local BFGS Behavior}
	\label{sec:lbfgs-linear}
	\label{sec:local}
	% ============================================================
	For fixed \(\tau\), let \(\Phi_\tau^*=\inf_z\Phi_\tau(z)\).  The constants
	below are those of Lemma~\ref{lem:armijo-reset} for the fixed layer.
	
	\begin{theorem}[Q-linear objective convergence under PL]
		\label{thm:pl-linear}
		Fix \(\tau>0\).  Suppose that, from some index onward, the iterates and the
		trial segments stay in a set on which \(\Phi_\tau\) has an \(L_\tau\)-Lipschitz
		gradient and satisfies
		\[
		\frac12\norm{\grad\Phi_\tau(z)}^2\ge
		\mu_{\rm PL}\bigl(\Phi_\tau(z)-\Phi_\tau^*\bigr),\qquad \mu_{\rm PL}>0.
		\]
		Then the preconditioned HMLBFGS objective values converge Q-linearly:
		\[
		\Phi_\tau(z^{\ell+1})-\Phi_\tau^*
		\le q_{\rm PL}\bigl(\Phi_\tau(z^\ell)-\Phi_\tau^*\bigr),
		\qquad q_{\rm PL}\in(0,1).
		\]
	\end{theorem}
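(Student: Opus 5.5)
The plan is to combine the standard Armijo sufficient-decrease estimate with the fixed-layer direction bounds of Lemma~\ref{lem:armijo-reset}, and then close the argument with the PL inequality. The first step records that, from the index onward at which the hypotheses hold, every accepted direction (nonrestart or restart $-g$) satisfies
\[
g_\ell^Td_\ell\le -h\norm{g_\ell}^2,\qquad \norm{d_\ell}\le H\norm{g_\ell},
\]
with $h=\min\{h_\tau,1\}$ and $H=\max\{H_\tau,1\}$. These constants come from Lemma~\ref{lem:lbfgs-uniform} and do not depend on $\ell$. Here $g_\ell=\grad\Phi_\tau(z^\ell)$.

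The second step bounds the accepted step size from below. Because the trial segments stay in the set where $\grad\Phi_\tau$ is $L_\tau$-Lipschitz, the descent lemma gives
\[
\Phi_\tau(z+\lambda d)\le\Phi_\tau(z)+\lambda g^Td+\tfrac{L_\tau}{2}\lambda^2\norm d^2 .
\]
So the Armijo condition \eqref{eq:full-armijo} holds for every $\lambda\le\bar\lambda:=2(1-\alpha)h/(L_\tau H^2)$. Backtracking with factor $\beta$ from \texttt{initialStep} therefore returns
\[
\lambda_\ell\ge\lambda_{\min}:=\min\{\texttt{initialStep},\,\beta\bar\lambda\}>0 .
\]
Inserting this into Armijo yields the uniform sufficient decrease
\[
\Phi_\tau(z^{\ell+1})\le\Phi_\tau(z^\ell)-c\norm{g_\ell}^2,\qquad c:=\alpha\lambda_{\min}h>0 .
\]

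The third step subtracts $\Phi_\tau^*$ and applies the PL inequality $\norm{g_\ell}^2\ge2\mu_{\rm PL}(\Phi_\tau(z^\ell)-\Phi_\tau^*)$. This gives the claim with $q_{\rm PL}=1-2\mu_{\rm PL}c$. To see that $q_{\rm PL}\in(0,1)$, compare the sufficient decrease with the lower bound $\Phi_\tau(z^{\ell+1})\ge\Phi_\tau^*$; this forces $2\mu_{\rm PL}c\le1$, so $q_{\rm PL}\ge0$. If equality occurs, I will simply shrink $c$ slightly, which keeps $q_{\rm PL}>0$ and still gives a valid bound. Also $q_{\rm PL}<1$ because $c,\mu_{\rm PL}>0$.

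The main obstacle is the uniformity of the constants along the tail of the sequence. The metric bounds in Lemma~\ref{lem:lbfgs-uniform} require a compact set containing the iterates, on which $P$ is bounded and the stored pairs obey \eqref{eq:pair-safeguard}. I will take this compact set to be the set in the hypothesis, intersected with the compact level set of Assumption~\ref{ass:level-bounded}; monotone Armijo descent keeps the iterates inside it. I also need that Armijo is not terminated by a cap, or alternatively that the cap exceeds the number of halvings needed to reach $\bar\lambda$. Since $\lambda_{\min}$ is explicit, a sufficiently generous cap suffices. The remaining choices (memory resets, the choice of correction mode) do not matter, because every accepted direction obeys the two inequalities of the first step.
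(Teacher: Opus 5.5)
Your proposal is correct and follows essentially the same route as the paper: the fixed-layer direction bounds behind Lemma~\ref{lem:armijo-reset} give a uniform lower bound on the accepted Armijo step, hence a decrease of at least $\alpha\widehat\lambda_\tau h_\tau\norm{\grad\Phi_\tau(z^\ell)}^2$, and the PL inequality then yields $q_{\rm PL}=1-2\alpha\widehat\lambda_\tau h_\tau\mu_{\rm PL}$, shrunk if necessary to lie in $(0,1)$. One small caveat concerns your side remark: monotone Armijo descent keeps the iterates in $\mathcal K_0$ only for the first layer $\tau=\tau_0$, because for $\tau<\tau_0$ one has $\Phi_\tau\le\Phi_{\tau_0}$, so a decrease of $\Phi_\tau$ does not bound $\Phi_{\tau_0}$; for later layers the compactness needed in Lemma~\ref{lem:lbfgs-uniform} must instead be assumed, which is exactly what the paper tacitly does by importing the hypotheses of Lemma~\ref{lem:armijo-reset}.
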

	
	\begin{proof}
		The proof of Lemma~\ref{lem:armijo-reset} supplies a fixed positive lower
		bound \(\widehat\lambda_\tau\) for the accepted step.  Armijo decrease and
		the PL inequality yield the claim with
		\(q_{\rm PL}=1-2\alpha\widehat\lambda_\tau h_\tau\mu_{\rm PL}\), after
		decreasing \(\widehat\lambda_\tau\), if needed, to make this number lie in
		\((0,1)\).
	\end{proof}
	
	\begin{theorem}[R-linear iterates under local strong convexity]
		\label{thm:strong-rlinear}
		Fix \(\tau>0\).  Let \(U\) be a convex neighborhood and suppose that
		\(z^*\in U\) is stationary and that, for every \(z\in U\),
		
		\[m_\Phi I\preceq\hess\Phi_\tau(z)\preceq M_\Phi I,qquad 0<m_\Phi\le M_\Phi.
		\]
		Assume that, from some index onward, all iterates and all trial segments
		remain in \(U\).  Then \(z^*\) is the unique minimizer of \(\Phi_\tau\) on \(U\),
		and the preconditioned HMLBFGS objective values converge Q-linearly to
		\(\Phi_\tau(z^*)\); consequently, the iterates converge R-linearly to \(z^*\).
	\end{theorem}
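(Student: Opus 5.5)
Since \(U\) is convex and \(\hess\Phi_\tau\succeq m_\Phi I\) on \(U\), \(\Phi_\tau\) is \(m_\Phi\)-strongly convex on \(U\). For any \(z\in U\), Taylor's formula along the segment \([z^*,z]\subset U\), together with \(\grad\Phi_\tau(z^*)=0\), gives
\[
\frac{m_\Phi}{2}\norm{z-z^*}^2\le\Phi_\tau(z)-\Phi_\tau(z^*)\le\frac{M_\Phi}{2}\norm{z-z^*}^2 .
\]
This immediately shows that \(z^*\) is the unique minimizer of \(\Phi_\tau\) on \(U\). The Hessian bound also shows that \(\grad\Phi_\tau\) is \(M_\Phi\)-Lipschitz on \(U\).

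For the PL-type inequality, strong convexity gives, for \(z\in U\),
\[
\Phi_\tau(z^*)\ge\Phi_\tau(z)+\grad\Phi_\tau(z)^T(z^*-z)+\frac{m_\Phi}{2}\norm{z^*-z}^2 .
\]
Minimizing the right-hand side over \(z^*\) yields
\[
\frac12\norm{\grad\Phi_\tau(z)}^2\ge m_\Phi\bigl(\Phi_\tau(z)-\Phi_\tau(z^*)\bigr).
\]
This is the PL inequality on \(U\), with \(\mu_{\rm PL}=m_\Phi\) and the reference value \(\Phi_\tau(z^*)\) in place of the global infimum.

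I then rerun the proof of Theorem~\ref{thm:pl-linear} on \(U\) with this reference value. That proof only uses two ingredients: the Armijo decrease with the fixed-layer step lower bound \(\widehat\lambda_\tau\) and the descent constant \(h_\tau\) from Lemma~\ref{lem:armijo-reset} (applied with \(L_\tau=M_\Phi\)), and the PL inequality at the current iterate. Combining them gives, from the index onward at which iterates and trial segments stay in \(U\),
\[
\Phi_\tau(z^{\ell+1})-\Phi_\tau(z^*)\le q\bigl(\Phi_\tau(z^\ell)-\Phi_\tau(z^*)\bigr),
\qquad q=1-2\alpha\widehat\lambda_\tau h_\tau m_\Phi\in(0,1).
\]
This is the asserted Q-linear convergence of the objective values to \(\Phi_\tau(z^*)\).

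The main obstacle is conceptual rather than computational: Theorem~\ref{thm:pl-linear} is phrased with the global infimum \(\Phi_\tau^*\), whereas here only local information on \(U\) is available. I would state that its proof never uses global minimality of \(\Phi_\tau^*\), only the PL inequality at the iterates relative to that value, so replacing \(\Phi_\tau^*\) by \(\Phi_\tau(z^*)\) is legitimate. Since \(z^*\) minimizes \(\Phi_\tau\) on \(U\), all gaps \(\Phi_\tau(z^\ell)-\Phi_\tau(z^*)\) are nonnegative, which the contraction argument needs. A secondary point is that the constants \(h_\tau,\widehat\lambda_\tau\) require the fixed-layer metric bounds of Lemma~\ref{lem:lbfgs-uniform}. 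I would take these from a compact set containing the iterates, for instance \(\mathcal K_0\) under Assumption~\ref{ass:level-bounded}, as in Theorem~\ref{thm:pl-linear}.

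Finally, iterating the contraction from the index \(\ell_0\) gives
\[
\Phi_\tau(z^\ell)-\Phi_\tau(z^*)\le q^{\ell-\ell_0}\bigl(\Phi_\tau(z^{\ell_0})-\Phi_\tau(z^*)\bigr).
\]
The quadratic growth bound above then gives
\[
\norm{z^\ell-z^*}\le\sqrt{2\bigl(\Phi_\tau(z^{\ell_0})-\Phi_\tau(z^*)\bigr)/m_\Phi}\;\bigl(\sqrt q\bigr)^{\ell-\ell_0},
\]
which is R-linear convergence of the iterates to \(z^*\).
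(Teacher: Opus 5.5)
Your proposal is correct and follows essentially the same route as the paper. Strong convexity on the convex set \(U\) gives the local PL and quadratic-growth inequalities relative to \(\Phi_\tau(z^*)\). The fixed-layer Armijo step bound of Lemma~\ref{lem:armijo-reset} then yields the Q-linear contraction with \(q=1-2\alpha\widehat\lambda_\tau h_\tau m_\Phi\), and quadratic growth turns this into R-linear convergence of the iterates.

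Your explicit remark that the PL argument needs only the reference value \(\Phi_\tau(z^*)\), not the global infimum, makes precise a step the paper leaves implicit. One small refinement: the boundedness needed for the fixed-layer constants follows from the theorem's own hypotheses, since monotone Armijo decrease and quadratic growth confine the tail iterates to a ball around \(z^*\). So you need not invoke Assumption~\ref{ass:level-bounded}, which concerns the layer \(\tau_0\).
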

	
	\begin{proof}
		The Hessian bounds imply that \(\Phi_\tau\) is \(m_\Phi\)-strongly convex on
		\(U\).
		Since \(\grad\Phi_\tau(z^*)=0\), \(z^*\) is the unique minimizer on
		\(U\), and the local PL and quadratic-growth inequalities are
		\[
		\frac12\norm{\grad\Phi_\tau(z)}^2\ge m_\Phi\bigl(\Phi_\tau(z)-\Phi_\tau(z^*)\bigr),
		\qquad
		\Phi_\tau(z)-\Phi_\tau(z^*)\ge \frac{m_\Phi}{2}\norm{z-z^*}^2.
		\]
		Using the fixed-layer Armijo lower bound in Lemma~\ref{lem:armijo-reset},
		there is a \(q\in(0,1)\) such that, for all sufficiently large \(\ell\),
		\[
		\Phi_\tau(z^{\ell+1})-\Phi_\tau(z^*)
		\le q\bigl(\Phi_\tau(z^\ell)-\Phi_\tau(z^*)\bigr).
		\]
		The quadratic-growth inequality then gives
		\(\norm{z^\ell-z^*}\le C(\sqrt q)^{\,\ell}\) for a finite \(C\),
		which is R-linear convergence.
	\end{proof}
	
	\begin{proposition}[Local recovery of ordinary BFGS]
		\label{prop:local-inactive}
		Consider the full-memory BFGS method with the fixed initial matrix
		\(H^{(0)}=\chi_0P(z^0,\tau)^{-1}\); the preconditioner is not reinitialized
		at later iterations.  Suppose the hypotheses of Theorem~\ref{thm:strong-rlinear}
		hold, all sufficiently late accepted segments lie in its neighborhood, the
		Hessian is locally Lipschitz, and
		\(0<\deltac<m_\Phi\), \(\Lc>M_\Phi^2/m_\Phi\).  Then all sufficiently late
		pairs pass the direct positive-curvature test and the N, E, and H corrections
		are inactive.  If unit steps are eventually accepted, the corresponding
		full-memory modified BFGS method converges Q-superlinearly.
	\end{proposition}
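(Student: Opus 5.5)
The argument splits into two parts: first show the correction mechanism is eventually inactive, then reduce to the classical Dennis--Moré analysis of BFGS.

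\emph{Inactivity of the corrections.} Take a late accepted pair \(s=z^{\ell+1}-z^\ell\), \(y=\grad\Phi_\tau(z^{\ell+1})-\grad\Phi_\tau(z^\ell)\), whose segment lies in the convex neighborhood \(U\). By the mean-value theorem, \(y=\bar G s\) with the averaged Hessian
\[
\bar G=\int_0^1\hess\Phi_\tau(z^\ell+\theta s)\,d\theta,
\qquad
m_\Phi I\preceq\bar G\preceq M_\Phi I .
\]
Hence \(c=s^Ty\ge m_\Phi\norm s^2>\deltac\norm s^2\). We also have \(\norm y^2=s^T\bar G^2s\le M_\Phi\, s^T\bar Gs\), and therefore
\[
\frac{\norm y^2}{\Lc}\le \frac{M_\Phi}{\Lc}\,c<c,
\]
because \(\Lc>M_\Phi^2/m_\Phi\ge M_\Phi\). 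Thus \(c>\eta(s,y)\) in \eqref{eq:curvature-classification}, and Algorithm~\ref{alg:correction} returns \((y,\texttt{store})\). In particular \(c>0\), so the optional zero-curvature tolerance does not trigger either, provided it is taken smaller than \(m_\Phi\norm s^2\). None of the \(\Ncorr,\Ecorr,\Hcorr\) branches is reached. This step is a direct calculation.

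\emph{Reduction to ordinary BFGS.} From some index \(\ell_0\) onward, the full-memory recursion is exactly the standard BFGS update \eqref{eq:bfgs} with the true pairs \((s_\ell,y_\ell)\). It starts from the symmetric positive definite matrix \(B_{\ell_0}\), which is produced by finitely many earlier (possibly corrected) updates of \(P(z^0,\tau)/\chi_0\). The earlier history only affects this starting matrix, and the classical theory allows any symmetric positive definite start. By Theorem~\ref{thm:strong-rlinear}, the iterates converge R-linearly to \(z^*\), which gives
\[
\sum_\ell\norm{z^\ell-z^*}<\infty .
\]
Combined with the local Lipschitz continuity of \(\hess\Phi_\tau\) and the positive definiteness \(\hess\Phi_\tau(z^*)\succeq m_\Phi I\), these are precisely the hypotheses of the Byrd--Nocedal / Dennis--Moré analysis. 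That analysis uses the trace--log-det potential \(\psi(B)=\operatorname{tr}B-\ln\det B\) applied to \(\hess\Phi_\tau(z^*)^{-1/2}B\,\hess\Phi_\tau(z^*)^{-1/2}\), and yields the Dennis--Moré condition
\[
\frac{\norm{(B_\ell-\hess\Phi_\tau(z^*))s_\ell}}{\norm{s_\ell}}\to0 .
\]

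\emph{Superlinear rate.} Since unit steps are eventually accepted, \(s_\ell=-B_\ell^{-1}\grad\Phi_\tau(z^\ell)\). The Dennis--Moré characterization theorem then gives Q-superlinear convergence.

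The main obstacle is the reduction step. We must check that the summability argument applies even though the matrices before \(\ell_0\) were built with corrected pairs and the preconditioner. We must also check that the Armijo-based R-linear result supplies \(\sum_\ell\norm{s_\ell}<\infty\) rather than requiring Wolfe steps. Both reduce to the observation that the classical bounded-deterioration estimate only needs a symmetric positive definite \(B_{\ell_0}\) and summable errors from that index on. I would invoke that result directly rather than reprove it.
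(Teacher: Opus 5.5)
Your first part is correct and follows the paper's argument. Your estimate $\norm{y}^2=s^T\bar G^2s\le M_\Phi\,s^Ty$ is sharper than the paper's $\norm{y}^2\le (M_\Phi^2/m_\Phi)\,s^Ty$, and would only need $\Lc>M_\Phi$.

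\textbf{The gap is in the reduction step.} You write ``By Theorem~\ref{thm:strong-rlinear}, the iterates converge R-linearly.'' That theorem concerns the \emph{limited-memory} HMLBFGS iteration. Its linear decrease comes from the uniform Armijo step bound of Lemma~\ref{lem:armijo-reset}. That bound in turn rests on the metric bounds of Lemma~\ref{lem:lbfgs-uniform}, which are obtained by iterating the update at most $m_L$ times: the bound on $\lambda_{\max}(B^{(j)})$ grows by $\Lc$ per update, and the bound on $\norm{H^{(j)}}$ grows geometrically. In the full-memory method of the proposition the number of updates is unbounded. So no uniform inequalities $g^Td\le-h\norm g^2$, $\norm d\le H\norm g$ are available, and the theorem's conclusion does not carry over.

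Your closing paragraph notices this, but it only explains how summability would be \emph{used}: an SPD $B_{\ell_0}$ plus summable errors. It never shows how summability is \emph{obtained}. Without $\sum_\ell\norm{z^\ell-z^*}<\infty$ for the full-memory iterates, the trace--log-det argument does not deliver the Dennis--Mor\'e condition.

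\textbf{How to fill it.} Use the Byrd--Nocedal angle theorem, which is the R-linear half of the analysis you cite.
\begin{enumerate}[label=(\arabic*)]
\item From $\ell_0$ on, your first part gives BFGS updates with true pairs satisfying $s^Ty\ge m_\Phi\norm s^2$ and $\norm y^2\le M_\Phi\,s^Ty$.
\item Hence, for any $p\in(0,1)$, there are $\beta_1,\beta_2,\beta_3>0$ such that $\cos\theta_j:=s_j^TB_js_j/(\norm{s_j}\norm{B_js_j})\ge\beta_1$ and $\beta_2\le s_j^TB_js_j/\norm{s_j}^2\le\beta_3$ hold for at least $\lceil pk\rceil$ of the first $k$ tail indices.
\item On each such index, $\norm{B_js_j}\le(\beta_3/\beta_1)\norm{s_j}$. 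Armijo backtracking from the unit step, using the gradient Lipschitz constant on $U$ (which contains the trial segments), then gives $\Phi_\tau(z^j)-\Phi_\tau(z^{j+1})\ge c\norm{g_j}^2$. By the PL inequality on $U$, this is at least $2cm_\Phi\bigl(\Phi_\tau(z^j)-\Phi_\tau(z^*)\bigr)$.
\item Armijo monotonicity on the remaining indices yields R-linear decay of the objective gap.
\item Quadratic growth on $U$ transfers this to $\norm{z^\ell-z^*}$, which gives the summability you need.
\end{enumerate}
With this step inserted, your route is complete. It is more careful than the paper's proof, which asserts that continuity of the Hessian alone yields the Dennis--Mor\'e condition and never addresses summability.
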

	
	\begin{proof}
		For sufficiently late steps, \(y_k=G_ks_k\) with
		\(m_\Phi I\preceq G_k\preceq M_\Phi I\).  Hence
		\(s_k^Ty_k\ge m_\Phi\norm{s_k}^2\) and
		\(\norm{y_k}^2\le(M_\Phi^2/m_\Phi)s_k^Ty_k\), so the direct test is passed.
		The resulting tail is ordinary BFGS. More specifically, local strong
		convexity implies that, for all sufficiently large \(k\),
		\[
		s_k^Ty_k
		=
		s_k^T
		\left(
		\int_0^1\nabla^2\Phi_\tau(z_k+\theta s_k)\,d\theta
		\right)s_k
		\geq
		m_\Phi\|s_k\|^2>0,
		\]
		so that none of the negative-curvature corrections is activated eventually.
		Hence the modified secant vector coincides with the original one,
		\(\widetilde y_k=y_k\), and the method reduces locally to the standard BFGS
		iteration. The continuity of the Hessian yields the Dennis--Mor\'e condition,
		and the Dennis--Mor\'e theorem then gives the claimed superlinear conclusion
		under eventual unit steps.
	\end{proof}
	
	\section{Numerical Experiments}
	\label{sec:numerics}
	% ============================================================
	
	This section fixes the comparison protocol before any performance conclusion
	is drawn. Every reported run uses the same initial point, the same stopping
	tolerance, and the same evaluation-count convention. In particular, the
	three proposed methods use identical ordinary hyperbolic smoothing, continuation,
	reset, Armijo, and memory settings; their only difference is the strict
	negative-curvature branch in Algorithm~\ref{alg:correction}.
	
	\subsection{Methods and abbreviations}
	
	Table~\ref{tab:methods} lists the proposed variants, the necessary
	quasi-Newton baselines, and the full-model comparisons. The
	abbreviations are used consistently in all numerical tables and figures.
	
	\begin{table}[t]
		\centering
		\caption{Algorithms to be compared in the numerical study.}
		\label{tab:methods}
		\small
		\begin{tabular}{L{0.20\linewidth}L{0.70\linewidth}}
			\toprule
			Abbreviation & Description \\
			\midrule
			\THMLBFGSN &
			Proposed preconditioned ordinary-hyperbolic modified L-BFGS with the negative-curvature reflection
			$\ytilde=-y$. \\
			\THMLBFGSE &
			Proposed preconditioned ordinary-hyperbolic modified L-BFGS with the Euclidean
			nearest-point correction. \\
			\THMLBFGSH &
			Proposed preconditioned ordinary-hyperbolic modified L-BFGS with the
			$B_{\mathcal M,P}^{-1}$-metric nearest-point correction. \\
			SK--HMLBFGS &
			ordinary-hyperbolic L-BFGS that skips weak and nonpositive
			curvature pairs; it is the cautious-update baseline in the spirit of
			globalized L-BFGS~\cite{Mannel2025}. \\
			PD--HMLBFGS &
			ordinary-hyperbolic L-BFGS with Powell damping of the curvature
			pair~\cite{NocedalWright2006,Powell1978}. \\
			RL--HMLBFGS &
			Regularized limited-memory quasi-Newton method based on
			$B_k+\xi_kI$, implemented following the regularization framework
			of~\cite{KanzowSteck2023}. \\
			THSNM &
			The ordinary hyperbolic smoothing Newton method
			of~\cite{ZhaoTang2026}. \\
			HSNM &
			The full ordinary hyperbolic smoothing Newton counterpart
			of THSNM~\cite{ZhaoTang2026}. \\
			TASNM &
			The aggregate smoothing Newton method
			of~\cite{XiaoYu2010}. \\
			FMINIMAX &
			The MATLAB \texttt{fminimax} solver, used only as a software
			reference where its problem interface applies. \\
			\bottomrule
		\end{tabular}
	\end{table}
	
	\subsection{Shared settings and computing environment}
	
	The implementation uses the geometric continuation
	\[
	\tau_k=\tau_0\,\texttt{tauFactor}^{\,k},\qquad
	\tau_0>0,\quad0<\texttt{tauFactor}<1,
	\]
	and the Armijo choices $\alpha=10^{-4}$, $\beta=0.5$, with initial
	trial step one.  Each layer stops when the full smoothed gradient infinity
	norm is at most \texttt{tol}, subject to the explicit \texttt{innerMax}
	and \texttt{outerMax} safety caps.  For a preconditioned run, the common
	parameters are $(m_L,\deltac,\Lc,\chi_{\min},\chi_0,\chi_{\max},\delta,\bar L)$.
	The same values must be used for \THMLBFGSN, \THMLBFGSE, and \THMLBFGSH.
	The reported $\bar L$ must be a valid bound on the region traversed by
	the iterates and line-search trials, as explained in Remark~\ref{rem:Lbar-scope}.
	
	All computations are performed in MATLAB R2020b on a laptop with an AMD
	Ryzen~7 4800H CPU at 2.90\,GHz, 16\,GB RAM, and a 64-bit operating system,
	which is the computing environment reported in~\cite{ZhaoTang2026}.
	CPU time is the average over ten independent runs. For the Newton
	references, MATLAB functions \texttt{chol}, \texttt{rcond}, and
	\texttt{eig} are used exactly for the factorization, condition estimate,
	and smallest-eigenvalue calculation described in that reference; FMINIMAX
	is obtained by calling \texttt{fminimax}.
	
	\subsection{Test suite and reported quantities}
	
	The benchmark suite should first reproduce the finite minimax examples in
	Section~6 of~\cite{ZhaoTang2026}, including smooth and nonconvex component
	functions and problems with a large number of components. Additional
	finite-max quadratic and robust least-squares tests may be added only when
	the component functions, initial point, known reference value (if used),
	and stopping rule are stated in full.
	
	For each method and problem, we report
	\begin{itemize}
		\item function evaluations, gradient evaluations, and wall-clock time;
		\item final objective value, final full-smoothed gradient infinity norm,
		and the final smoothing parameter;
		\item the number of directly stored positive-curvature pairs, screened
		pairs, and correction-eligible negative-curvature pairs;
		\item the number of accepted corrections of each type;
		\item the curvature ratio $|s_k^Ty_k|/(s_k^TB_{\mathcal M,P}s_k)$ when the $H$-metric
		correction is used;
		\item the descent angle
		$-(g_k^\ell)^Td_k^\ell/(\norm{g_k^\ell}\norm{d_k^\ell})$ and the number of memory resets.
	\end{itemize}
	
	The resulting comparisons test three separate questions: whether the
	proposed pair corrections retain information that SK--THMLBFGS discards;
	whether the two minimum-change rules behave differently from direct sign
	reflection; and whether an anisotropic pair correction is competitive with
	the isotropic regularization in RL--THMLBFGS. These are empirical questions;
	no numerical superiority is claimed without the corresponding tables.
	
	% ============================================================
	\section{Conclusions and Future Work}
	\label{sec:conclusion}
	% ============================================================
	
	This final section summarizes the method and identifies directions needed to
	turn the present analysis and comparison protocol into a broader numerical
	and algorithmic study.
	
	This paper develops a preconditioned HMLBFGS framework for finite minimax
	problems with ordinary hyperbolic smoothing.  The hyperbolic-majorization
	matrix $P_k$ is constructed from the current residuals and component
	gradients, and supplies the initial inverse metric
	$H_{k,0}=\chi_kP_k^{-1}$.  The matching direct initialization
	$B_{k,0}=P_k/\chi_k$ is used in the H correction.  Only the curvature
	vector in a stored pair is modified when $s_k^Ty_k<0$.
	
	The three corrections have complementary interpretations. Direct sign
	reflection is the cheapest baseline. The Euclidean nearest-point rule is an
	exact minimum-change correction in the ordinary norm. The
	$B_{\mathcal M,P}^{-1}$-metric
	rule is an exact minimum-change correction in the geometry learned by the
	current quasi-Newton model and is evaluated by the direct compact L-BFGS
	product $\mathcal B_{\mathcal M}s_k$. All three rules produce the same
	positive secant curvature and therefore preserve positive definiteness and
	reliable first-order descent.
	
	The analysis proves positive definiteness, the global quadratic majorizer,
	and fixed-smoothing-layer L-BFGS spectral bounds.  These establish
	fixed-layer first-order convergence and Q-linear objective convergence under
	a PL condition; local strong convexity yields R-linear iterates.  A key
	boundary is explicit: with the current Euclidean pair safeguards, the
	analysis does not assert a $\tau$-uniform L-BFGS energy bound.  The ideal
	geometric continuation with vanishing inner tolerances yields Clarke
	stationarity, while the finite code with a fixed tolerance yields an
	approximate stationary point of its last smoothed layer.  Near a strongly
	convex fixed-smoothing solution, the corrections are eventually inactive
	and the corresponding full-memory BFGS method has the standard
	Q-superlinear conclusion under eventual unit steps.
	Future work has four natural directions: completing the controlled numerical
	comparisons in Section~\ref{sec:numerics}; deriving sharper condition-number
	bounds for the metric projection; choosing the correction mode adaptively
	from the observed pair geometry; and combining the present SPD L-BFGS metric
	with a separate saddle-escape mechanism when strict second-order stationarity
	is desired.
	
	% ============================================================
	% References
	% ============================================================

\end{document}